\newtheorem{Theorem}{Theorem}[section] 
\newtheorem{Lemma}[Theorem]{Lemma}     
\newtheorem{Definition}[Theorem]{Definition}
\newtheorem{Proposition}[Theorem]{Proposition}
\newtheorem{Conjecture}[Theorem]{Conjecture}
\theoremstyle{definition}
\newtheorem{Example}[]{Example}
\newtheorem{Remark}[Theorem]{Remark}
\renewcommand{\tilde}{\widetilde}
\newcommand{\GG}{\mathbb{G}}
\newcommand{\hQ}{\hat{\MQ}}
\newcommand{\FP}{\mathfrak{p}}
\newcommand{\MA}{\mathds{A}}
\newcommand{\MN}{\mathds{N}}
\newcommand{\MZ}{\mathds{Z}}
\newcommand{\MQ}{\mathds{Q}}
\newcommand{\MU}{\mathds{U}}
\newcommand{\MR}{\mathds{R}}
\newcommand{\MC}{\mathds{C}}
\newcommand{\MT}{\mathds{T}}
\newcommand{\MO}{\mathds{O}}
\newcommand{\CO}{\mathcal{O}}
\newcommand{\CH}{\mathcal{H}}
\newcommand{\Ct}{\vartheta}
\newcommand{\ind}{\mathds{1}}
\newcommand{\fp}{\mathfrak{p}}
\newcommand{\fq}{\mathfrak{q}}
\newcommand{\genus}{\mathrm{genus}}
\newcommand{\GL}{\mathrm{GL}}
\newcommand{\SL}{\mathrm{SL}}
\newcommand{\GSp}{\mathrm{GSp}}
\newcommand{\stab}{\operatorname{Stab}} \newcommand{\diag}{\operatorname{diag}}
\newcommand{\C}{\mathcal{C}} \renewcommand{\c}{\mathfrak{c}}
\newcommand{\End}{\mathrm{End}}
\newcommand{\Aut}{\mathrm{Aut}}
\newcommand{\Sym}{\mathrm{Sym}}
\newcommand{\tr}{\mathrm{Tr}}
\newcommand{\ord}{\mathrm{ord}}
\title{Automorphic Forms on Feit's Hermitian Lattices}
\author[1]{Neil Dummigan}
 \author[2]{ Sebastian Sch\"onnenbeck\thanks{The author is supported by the DFG collaborative research center TRR 195} }
\affil[1]{School of Mathematics and Statistics \\
University of Sheffield \\
Hicks Building \\
Hounsfield Road \\
Sheffield, S3 7RH \\
United Kingdom \\
\texttt{n.p.dummigan@sheffield.ac.uk}}
\affil[2]{ RWTH Aachen University\\
   Lehrstuhl D f\"ur Mathematik\\
   Pontdriesch 14/16, 52062 Aachen\\
   Germany\\
  \texttt{sebastian.schoennenbeck@rwth-aachen.de}
}
\begin{document}
\maketitle

\begin{abstract}
We consider the genus of $20$ classes of unimodular Hermitian lattices of rank $12$ over the Eisenstein integers. This set is the domain for a certain space of algebraic modular forms. We find a basis of Hecke eigenforms, and guess global Arthur parameters for the associated automorphic representations, which recover the computed Hecke eigenvalues. Congruences between Hecke eigenspaces, combined with the assumed parameters, recover known congruences for classical modular forms, and support new instances of conjectured Eisenstein congruences for $\MU(2,2)$ automorphic forms.
\end{abstract}

\section{Introduction}
Nebe and Venkov \cite{NebeVenkov} looked at formal linear combinations of the $24$ Niemeier lattices, which represent classes in the genus of even, unimodular, Euclidean lattices of rank $24$. They found a set of $24$ eigenvectors for the action of an adjacency operator for Kneser $2$-neighbours, with distinct integer eigenvalues. What they did was equivalent to computing a set of Hecke eigenforms in a space of scalar-valued modular forms for a definite orthogonal group $\MO_{24}$. They made, and proved most of, a conjecture on the degrees in which the Siegel theta series of these eigenvectors are first non-vanishing.

Chenevier and Lannes \cite{ChenevierLannes} reconsidered the results of Nebe and Venkov, in the light of work of Arthur \cite{Arthur}, and found (with proof) the endoscopic type of the automorphic representation of $\MO_{24}(\MA_{\MQ})$ generated by each eigenvector. Each of these automorphic representations is a ``lift'' built out of automorphic representations of smaller rank groups, related to elliptic modular forms, and certain vector-valued Siegel modular forms of genus $2$. They looked at various easily-proved congruences of Hecke eigenvalues between pairs of eigenvectors. Writing the eigenvalues in terms of the endoscopic decompositions, they obtained, after much cancellation from both sides, not only well-known congruences such as Ramanujan's $\tau(p)\equiv 1+p^{11}\pmod{691}$, but also the first proved instance of a conjecture of Harder on congruences between Hecke eigenvalues of vector-valued Siegel cusp forms of genus $2$ and cusp forms of genus $1$, modulo large primes occurring in critical values of the $L$-functions of the latter \cite{Harder123}. The same method has subsequently been employed by M\'egarban\'e to prove several similar congruences, and also some involving automorphic forms for $\mathrm{SO}_7$ \cite{Megarbane}.

In this paper we replace the Niemeier lattices by the genus of $20$ classes of unimodular Hermitian lattices of rank $12$ over the Eisenstein integers. Thus the orthogonal group $\MO_{24}$ is replaced by a definite unitary group $\MU_{12}$. These classes were enumerated, and given explicit representatives, by Feit \cite{FeitSomeLattices}. Using Kneser $\FP$-neighbours, in particular for $\FP=(2)$ (but also for $\FP=(\sqrt{-3})$), we obtain a basis of $20$ eigenvectors in a space of scalar-valued algebraic modular forms. Using the computed Hecke eigenvalues, in tandem with the clues provided by various congruences between pairs of eigenvectors, we make compelling guesses for the endoscopic type of the automorphic representation of $\MU_{12}(\MA_{\MQ})$ generated by each eigenvector.

Assuming these guesses, after cancellation we more-or-less recover various congruences involving elliptic modular forms of levels $1$ or $3$, including Ramanujan's congruence, Eisenstein congruences ``of local origin'' \cite{DumFret} and Ribet-Diamond level-raising congruences \cite{Ribet,Diamond}. But we also obtain instances of conjectured congruences involving automorphic representations of $\MU(2,2)$, analogous to those of Harder, again with the moduli coming from critical $L$-values \cite{DumUnitary}. Indeed, the motivation for this work was to prove such congruences, following the work of Chenevier and Lannes on Harder's conjecture. However, because there is now a ``bad'' prime $3$, it appears that it is not yet technically feasible to do something similar here, see Remark \ref{technical}. So perhaps the main ``result'' of the paper is the table at the beginning of \S 4, with its computed Hecke eigenvalues and conjectured global Arthur parameters.

In Section 2 we review some background on algebraic modular forms. In Section 3 we describe how to use $\FP$-neighbours of Hermitian lattices to compute Hecke operators for definite unitary groups, giving us the matrices for $T_{(2)}$ and $T_{(\sqrt{-3})}$ on the $20$-dimensional space of primary interest in this paper. Section 4 starts with a table of Hecke eigenvalues and conjectured global Arthur parameters, and proceeds to show how to recover the former from the latter (Propositions \ref{recoverT2},\ref{RecoverT3}). In Section 5 we prove congruences of Hecke eigenvalues between pairs of eigenspaces, and show how they may be explained using the conjectured global Arthur parameters (providing further evidence for the latter), then concentrating in Section 6 on congruences involving $\MU(2,2)$. Section 7 contains some guesses on Hermitian theta series and Hermitian Ikeda lifts. It should be noted that since the work of Feit, unimodular Hermitian lattices of ranks $13,14$ and $15$ over the Eisenstein integers have also been classified, by Abdukhalikov and Scharlau \cite{Abdukhalikov13, Abdukhalikov1415}, the total numbers of classes being $34, 93$ and $353$, respectively.
\section{Preliminaries and notation}\label{Prelim}
We establish the notation for the rest of the article and provide the necessary background on algebraic modular forms.
Let $E$ be an imaginary quadratic number field and $n \in \MN$. We denote by $V_n$ the $n$-dimensional $E$-space $E^n$ endowed with the standard Hermitian form
\begin{equation}
 \langle v,w \rangle = v^\dagger w,
\end{equation}
where $v^\dagger$ denotes the conjugate transpose of $v$.
By $\MU_n$ we will denote the unitary group stabilising this form. In other words $\MU_n$ is the linear algebraic group over $\MQ$ whose group of $A$-rational points is given by
\begin{equation}
 \MU_n(A)=\{g \in \GL_n(A \otimes_\MQ E)~|~g^\dagger g = I_n \}
\end{equation}
for any commutative $\MQ$-algebra $A$, where $I_n$ is the $n \times n$-unit matrix.

The group $\MU_n(\MR)$ is the usual unitary group of degree $n$ over the complex numbers and hence compact. Finally we let
\begin{equation}
 \mathrm{U}_n:=\MU_n(\MQ)=\{g \in \GL_n(E)~|~g^\dagger g = I_n\}.
\end{equation}

\subsection{Hermitian lattices}
Let $\CO_E$ be the ring of integers of $E$. By an Hermitian lattice we will always mean a full $\CO_E$-lattice in $V_n$.
\begin{Definition}
Let $L \subset V_n$ be an Hermitian lattice. The dual of $L$ is defined as
\begin{equation}
 L^\# :=\{v \in V_n~|~\langle v,L \rangle \subset \CO_E \}.
\end{equation}
Let $I_1,...,I_n$ be the invariant factors of $L^\#$ and $L$. (These fractional ideals of $E$ are as defined on \cite[p.14]{GreenbergVoightLatticeMethods}.) Then
\begin{equation}
 \partial(L):=\prod_{i=1}^n I_i
\end{equation}
is called the discriminant of $L$. We call $L$ integral if $L \subset L^\#$. For a fractional ideal $I \subset E$ we call $L$ $I$-modular if $ L^\# = I \cdot L$. If $L$ is $\CO_E$-modular (so $L=L^\#$) we call $L$ unimodular.
\end{Definition}

\begin{Definition}
Let $L$ be an Hermitian lattice. The group
\begin{equation}
\Aut(L):=\stab_{\mathrm{U}_n}(L)=\{g \in \mathrm{U}_n~|~gL=L \}
\end{equation}
is called the automorphism group of $L$.
\end{Definition}

\subsection{Open compact subgroups arising from lattices}

Let $\hQ=\hat{\MZ}\otimes_{\MZ}\MQ$ be the $\MQ$-algebra of finite adeles of $\MQ$. So
\begin{equation}
 \hat{\MQ}=\left\{(x_p)_p \in \prod_p\MQ_p~|~x_p \in \MZ_p \text{ f.a.a. } p \right\}.
\end{equation}
Let $\MA_\MQ:=\MR \times \hQ$ be the full ring of adeles. The formula $gL:=(g(L\otimes_{\MZ}\hQ))\cap V_n$ gives an action of $\MU_n(\hQ)$ on the set of Hermitian lattices in $V_n$.
The orbit of $L$ is called the $\MU_n$-genus of $L$, denoted $\genus(L)$. If $L$ is integral, so is anything in $\genus(L)$.
Given an Hermitian lattice $L$ in $V_n$, we obtain an open, compact subgroup $K_L=\stab_{\MU_n(\hQ)}(L)$ with
\begin{equation}
 K_L=\prod_{p}K_{L,p},\text{ where } K_{L,p}=\stab_{\MU_n(\MQ_p)}(L \otimes \MZ_p).
\end{equation}
The group $K_{L,p}$ is a hyperspecial maximal compact subgroup for all but finitely many finite primes (cf. \cite[Prop. 3.3]{CohenNebePlesken}). Moreover, two open compact subgroups $K$ and $K'$ arising as stabilizers in this way coincide at all but finitely many places.

In this situation, decomposing $\MU_n(\hQ)$ into $\mathrm{U}_n$-$K_L$-double cosets amounts to the same as finding representatives for the isomorphism classes in the $\MU_n$-genus of $L$, i.e. decomposing the $\MU_n(\hQ)$-orbit of $L$ into $\mathrm{U}_n$-orbits. The class number $|\Sigma_{K_L}|$, where $\Sigma_{K_L}:=\mathrm{U}_n \backslash \MU_n(\hQ) / K_L$, is then also called the class number of $L$.

\begin{Remark}
 The lattice $L_0:=\CO_E^n$ is unimodular and its $\MU_n$-genus consists exactly of all unimodular lattices. Moreover, for $E=\MQ(\sqrt{-3})$ the unimodular lattices in dimension at most $12$ are fully classified, see \cite{FeitSomeLattices}.
\end{Remark}

\subsection{Algebraic modular forms}
The primary reference for this subsection is Gross's original article \cite{GrossAlgebraicModularForms}. In addition we also refer to the more algorithmically oriented article by Greenberg and Voight \cite{GreenbergVoightLatticeMethods}.

Let $\rho:\MU_n \rightarrow
\GL_V$ be an irreducible finite-dimensional rational representation of $\MU_n$ defined over $\MC$, and
 let $K$ be an open compact subgroup of $\MU_n(\hQ)$. For us, $K$ will always be a lattice stabiliser $K_L$ as above, and $\rho$ will usually be the trivial representation.

\begin{Definition}
  The space of algebraic modular forms of weight $V$ and level $K$ is defined as
\begin{equation}
\begin{split}
M(V,K)&=\left\{ f:\MU_n(\hat{Q})/ K \rightarrow V ~|~  \substack{f(g\gamma )=gf(\gamma) \text{ for } \\ \gamma \in \MU_n(\hat{Q}),~g \in \mathrm{U}_n }\right\} \\
&\cong \left\{ f:\MU_n(\hat{Q}) \rightarrow V ~|~  \substack{f(g \gamma \kappa)=gf(\gamma) \text{ for } \gamma \in
\MU_n(\hat{Q}),\\~g \in \mathrm{U}_n,\kappa \in K  }\right\}.
\end{split}
\end{equation}
\end{Definition}

 The structure of $M(V,K)$ is summarized in the following proposition.
\begin{Proposition}[\protect{\cite[Prop. (4.3),(4.5)]{GrossAlgebraicModularForms}}]
 Let $\Sigma_K:=\mathrm{U}_n \backslash \MU_n(\hQ) / K$. The following holds:
\begin{enumerate}
 \item The set $\Sigma_K$ is finite.
 \item If $\alpha_i,1\leq i \leq h,$ is a system of representatives for $\Sigma_K$ and
\begin{equation}
 \Gamma_i:=\mathrm{U}_n \cap \alpha_i K \alpha_i^{-1},
\end{equation}
then
\begin{equation}\label{ModularFormSpaceDecomp}
 M(V,K) \rightarrow \bigoplus_{i=1}^h V^{\Gamma_i},~f\mapsto (f(\alpha_1),...,f(\alpha_h))
\end{equation}
is an isomorphism of vector spaces, where $V^{\Gamma_i}$ denotes the $\Gamma_i$ fixed points in $V$. In particular $M(V,K)$ is finite-dimensional.
\end{enumerate}
\end{Proposition}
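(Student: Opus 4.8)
The plan is to handle the two assertions quite differently: part (1) rests on genuine input from reduction theory, whereas part (2) is a formal manipulation of double cosets, and the final finite-dimensionality statement is an immediate corollary of the two.

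For part (1), I would deduce finiteness of $\Sigma_K=\mathrm{U}_n\backslash\MU_n(\hQ)/K$ from the finiteness of class numbers for reductive groups over $\MQ$. Since $\MU_n(\MR)$ is compact, projecting away the archimedean component identifies $\Sigma_K$ with $\MU_n(\MQ)\backslash\MU_n(\MA_\MQ)/(\MU_n(\MR)\times K)$, where $\MU_n(\MR)\times K$ is an open compact subgroup of $\MU_n(\MA_\MQ)$. Because $\MU_n(\MR)$ is compact, $\MU_n$ contains no $\MQ$-split torus, so it is anisotropic over $\MQ$; by the Borel--Harish-Chandra theorem $\MU_n(\MQ)\backslash\MU_n(\MA_\MQ)$ is then compact, and its quotient by the open (hence closed) subgroup $\MU_n(\MR)\times K$ is a compact discrete space, therefore finite. (Equivalently, for $K=K_L$ the set $\Sigma_K$ is in bijection with the isomorphism classes in $\genus(L)$, and the finiteness of the number of classes in a genus of Hermitian lattices is classical via Hermite--Minkowski reduction; I would simply cite this.)

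For part (2), I would first check the map of \eqref{ModularFormSpaceDecomp} lands in $\bigoplus V^{\Gamma_i}$. If $\gamma\in\Gamma_i=\mathrm{U}_n\cap\alpha_iK\alpha_i^{-1}$, write $\gamma=\alpha_i\kappa\alpha_i^{-1}$ with $\kappa\in K$; then $\gamma\alpha_i=\alpha_i\kappa$ represents the same element of $\MU_n(\hQ)/K$ as $\alpha_i$, so $\gamma f(\alpha_i)=f(\gamma\alpha_i)=f(\alpha_i\kappa)=f(\alpha_i)$, i.e. $f(\alpha_i)\in V^{\Gamma_i}$. Injectivity is immediate: every element of $\MU_n(\hQ)/K$ has the form $g\alpha_iK$ with $g\in\mathrm{U}_n$, so if all $f(\alpha_i)$ vanish then $f(g\alpha_i)=gf(\alpha_i)=0$ and $f\equiv0$. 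For surjectivity, given $(v_1,\dots,v_h)$ with $v_i\in V^{\Gamma_i}$, I would set $f(g\alpha_i\kappa):=gv_i$ for $g\in\mathrm{U}_n$, $\kappa\in K$, and verify this is unambiguous: the index $i$ is forced by disjointness of the double cosets, and if $g\alpha_i\kappa=g'\alpha_i\kappa'$ then $g^{-1}g'\in\mathrm{U}_n\cap\alpha_iK\alpha_i^{-1}=\Gamma_i$, whence $g'v_i=g(g^{-1}g')v_i=gv_i$. The resulting $f$ is by construction $\mathrm{U}_n$-equivariant and right $K$-invariant, so $f\in M(V,K)$, and it maps to $(v_1,\dots,v_h)$ since $f(\alpha_i)=v_i$. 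Finally, $h<\infty$ by part (1) and each $V^{\Gamma_i}\subseteq V$ is finite-dimensional, so $M(V,K)$ is finite-dimensional.

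I expect the only real obstacle to be part (1): the bookkeeping in part (2) is routine once the representatives $\alpha_i$ and the groups $\Gamma_i$ are in place, but the finiteness of $\Sigma_K$ genuinely requires reduction theory (or the classical finiteness of class numbers of Hermitian lattices). As the statement is attributed to Gross, I would invoke that input rather than reprove it.
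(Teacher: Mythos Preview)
Your argument is correct and is the standard one. However, note that the paper does not itself prove this proposition: it merely quotes it with the attribution \cite[Prop.~(4.3),(4.5)]{GrossAlgebraicModularForms} and gives no proof at all. So there is nothing in the paper to compare against; you have supplied exactly the kind of justification that the cited reference provides, namely reduction-theoretic finiteness for part~(1) and the routine double-coset bookkeeping for part~(2).
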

\begin{Remark}
 The groups $\Gamma_i$ are discrete subgroups of the compact group $\MU_n(\MR)$ hence finite. When $K=K_L, \Gamma_i=\Aut(\alpha_i L)$. For $V=\MC$ the trivial representation, and in the notation of the preceding proposition, we have $V^{\Gamma_i}=V$ for all $i$, so there is a natural isomorphism between $M(V,K)$ and the space of $\MC$-valued functions on $\Sigma_K$.
\end{Remark}

 \section{Hecke operators for unitary groups}
\subsection{Hecke operators}
We keep the notation from the previous section.

In addition to being a finite-dimensional $\MC$-vector space, the space of algebraic modular forms $M(V,K)$ also carries the structure of a module over the Hecke algebra of $\MU_n$ with respect to $K$. The Hecke algebra $H_K=H(\MU_n,K)$ is the $\MC$-algebra of all locally constant, compactly supported functions $\MU_n(\hQ) \rightarrow \MC$ which are $K$-bi-invariant. The multiplication in $H_K$ is given by convolution with respect to the (unique) Haar measure giving the compact group $K$ measure $1$.
The algebra $H_K$ has a canonical basis given by the characteristic functions of the double cosets with respect to $K$, $\ind_{K\gamma K}$, $\gamma \in \MU_n(\hQ)$.

 The action of $\ind_{K\gamma K}\in H_K$ on $M(V,K)$ is given as follows: Decompose $K\gamma K= \sqcup_{i} \gamma_i K$ into disjoint $K$-left cosets, then $\ind_{K \gamma K}$ acts via the operator $T(\gamma)=T(K\gamma K) \in \End(M(V,K))$ defined by
\begin{equation}
(T(\gamma)f)(x)=\sum_i f(x\gamma_i) \text{ for } f \in M(V,K),x\in \MU_n(\hQ).
\end{equation}

We can extend $T$ linearly to $H_K$ to obtain a homomorphism of $\MC$-algebras. Moreover,
\begin{Proposition}[\protect{\cite[Prop. (6.11)]{GrossAlgebraicModularForms}}]\label{AdjointOperator}
 $M(V,K)$ is a semisimple $H_K$-module.
\end{Proposition}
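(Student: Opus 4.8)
The plan is to equip $M(V,K)$ with a positive-definite Hermitian inner product with respect to which the image of $H_K$ in $\End(M(V,K))$ is stable under taking adjoints; semisimplicity of $M(V,K)$ as an $H_K$-module is then formal.

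First I would build the inner product. Since $\MU_n(\MR)$ is compact and $\rho$ is continuous, $V$ carries a positive-definite Hermitian form $(\cdot,\cdot)_V$ invariant under $\rho(\MU_n(\MR))$, hence in particular under $\rho(\mathrm{U}_n)$, because $\mathrm{U}_n=\MU_n(\MQ)\subseteq\MU_n(\MR)$. The group $\mathrm{U}_n$ is discrete in the unimodular group $\MU_n(\hQ)$ (it is discrete in $\MU_n(\MA_\MQ)$ and $\MU_n(\MR)$ is compact), so $\mathrm{U}_n\backslash\MU_n(\hQ)$ carries a right-invariant measure; normalising Haar measure on $\MU_n(\hQ)$ so that $K$ has volume $1$, the induced quotient measure $dg$ is finite since $\Sigma_K$ is. For $f_1,f_2\in M(V,K)$ the function $g\mapsto(f_1(g),f_2(g))_V$ is left $\mathrm{U}_n$-invariant (combine the defining transformation law of $M(V,K)$ with the $\mathrm{U}_n$-invariance of $(\cdot,\cdot)_V$) and right $K$-invariant, so it descends to the finite set $\Sigma_K$, and I would set
\[
(f_1,f_2):=\int_{\mathrm{U}_n\backslash\MU_n(\hQ)}(f_1(g),f_2(g))_V\,dg=\sum_{i=1}^{h}\frac{1}{|\Gamma_i|}\,(f_1(\alpha_i),f_2(\alpha_i))_V ,
\]
with $\alpha_i,\Gamma_i$ as in the preceding Proposition. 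This is a positive-definite Hermitian form; the weights $|\Gamma_i|^{-1}$, which are the volumes of the double cosets $\mathrm{U}_n\alpha_iK$, are precisely what the next step needs.

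Next I would compute the adjoint of a Hecke operator. Writing the $H_K$-action as convolution, $(T(\phi)f)(x)=\int_{\MU_n(\hQ)}\phi(g)f(xg)\,dg$ — which for $\phi=\ind_{K\gamma K}$ with $K\gamma K=\bigsqcup_i\gamma_iK$ recovers $(T(\gamma)f)(x)=\sum_i f(x\gamma_i)$ — a short change of variables (substitute $g\mapsto g^{-1}$, using unimodularity of $\MU_n(\hQ)$, then replace $x$ by $xg$ in the outer integral, a measure-preserving operation on $\mathrm{U}_n\backslash\MU_n(\hQ)$; the interchange of integrals is harmless, all integrals being finite sums) yields $(T(\phi)f_1,f_2)=(f_1,T(\phi^{\vee})f_2)$, where $\phi^{\vee}(g):=\overline{\phi(g^{-1})}$. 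Hence $T(\phi)^{*}=T(\phi^{\vee})$; and since $\ind_{K\gamma K}^{\vee}=\ind_{K\gamma^{-1}K}\in H_K$, the subalgebra $\mathcal{H}:=T(H_K)\subseteq\End(M(V,K))$ is closed under the adjoint attached to $(\cdot,\cdot)$.

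Finally I would invoke the elementary fact that a $*$-closed subalgebra $\mathcal{H}$ of the endomorphism algebra of a finite-dimensional Hermitian inner product space $W$ acts semisimply: every $\mathcal{H}$-stable subspace $W'$ has the $\mathcal{H}$-stable complement $(W')^{\perp}$ (if $w\perp W'$ and $a\in\mathcal{H}$ then $(aw,w')=(w,a^{*}w')=0$ for all $w'\in W'$, as $a^{*}\in\mathcal{H}$), so induction on $\dim W$ presents $W$ as a direct sum of simple $\mathcal{H}$-modules. Applying this to $\mathcal{H}=T(H_K)$ and $W=M(V,K)$ shows that $M(V,K)$ is semisimple over $T(H_K)$, hence over $H_K$. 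The only genuine work is the identity $T(\phi)^{*}=T(\phi^{\vee})$: the computation itself is routine, but one must keep careful track of which measures are invariant under which (left or right) translations and of the unimodularity of $\MU_n(\hQ)$, so this measure-theoretic bookkeeping is the main — though modest — obstacle.
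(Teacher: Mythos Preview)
Your argument is correct and is essentially the standard proof: endow $M(V,K)$ with the weighted inner product coming from a $\MU_n(\MR)$-invariant form on $V$, verify $T(\phi)^{*}=T(\phi^{\vee})$ by a change of variables, and conclude by the orthogonal-complement trick. The paper itself gives no proof at all---the proposition is simply quoted from Gross \cite[Prop.~(6.11)]{GrossAlgebraicModularForms}---and your write-up is precisely the argument Gross sketches there, so there is nothing to compare.
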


\subsection{Neighbours and Hecke operators}
We now want to describe how one can compute certain Hecke operators.
\begin{Definition}
 Let $L \subset V_n$ be an integral Hermitian lattice and let $\fp \subset \CO_E$ be a nonzero prime ideal of $\CO_E$ such that $\fp \nmid \partial(L)$. An integral Hermitian lattice $L'\subset V_n$ is called a $\fp$-neighbour of $L$ if $L/(L \cap L') \cong \CO_E/\fp$ and $L'/(L \cap L') \cong \CO_E/\overline{\fp}$. We denote the (finite) set of $\fp$-neighbours of $L$ by $N(L,\fp)$.
\end{Definition}

The ``neighbour method'' (for obtaining the classes in a genus) was introduced by Kneser \cite{Kneser} for integral Euclidean lattices, and extended to the Hermitian case by Iyanaga and Hoffmann in special cases \cite{Iyanaga},\cite{Hoffmann} and by Schiemann in general \cite{Schiemann}. See the beginning of \S5 of \cite{GreenbergVoightLatticeMethods} for further comments on the development of the method.
\begin{Proposition}
\begin{enumerate}
\item Suppose that $n\geq 3$, and take $L$, $\fp$ as above. Then $N(L,\fp)\subseteq\genus(L)$.
\item Take an element $\gamma \in \MU_n(\hQ)$ such that $\gamma L \in N(L,\fp)$. Then $N(L,\fp) = (K_L\gamma K_L) \{L\}$.
\end{enumerate}
\end{Proposition}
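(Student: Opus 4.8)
The plan is to reduce both assertions to the single rational prime $p$ lying under $\fp$, and there to use transitivity of the local unitary group. I use throughout that $\genus(L)$, being the $\MU_n(\hQ)$-orbit of $L$, is exactly the set of Hermitian lattices $L'\subset V_n$ with $L'_q$ isometric to $L_q$ as $\CO_E\otimes\MZ_q$-lattices for every finite prime $q$: from such local isometries, together with the fact that $L_q=L'_q$ for all but finitely many $q$, one assembles an element of $\MU_n(\hQ)$ carrying $L$ to $L'$ (taking the identity at the cofinitely many $q$ where $L_q=L'_q$), and conversely any $g\in\MU_n(\hQ)$ with $gL=L'$ restricts to isometries $g_q\colon L_q\to L'_q$ since the unitary action preserves the form.

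For part~(1), let $L'\in N(L,\fp)$. At a prime $q\neq p$ the finite modules $L/(L\cap L')\cong\CO_E/\fp$ and $L'/(L\cap L')\cong\CO_E/\overline{\fp}$ are killed by $p$, so $L_q=(L\cap L')_q=L'_q$ and nothing is to be checked. At $q=p$ I argue that both $L_p$ and $L'_p$ are unimodular. For $L_p$ this is the hypothesis $\fp\nmid\partial(L)$: the discriminant ideal $\partial(L)$ is $\gal(E/\MQ)$-stable (and anyway, for $\fp=(2)$ and $\fp=(\sqrt{-3})$, $\fp$ is the only prime of $E$ above $p$), so $p\nmid\partial(L)$, i.e. $L_p=L_p^\#$. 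For $L'_p$: the Hermitian form makes $(L_p\cap L'_p)^\#/(L_p\cap L'_p)$ a nondegenerate Hermitian module of length $2$ containing the isotropic line $L_p/(L_p\cap L'_p)$ (isotropic because $L_p$ is integral), hence hyperbolic with exactly two isotropic lines; the second is the image of $L'_p$, and, being its own orthogonal complement, it coincides with the image of $(L'_p)^\#$ in this quotient, whence $(L'_p)^\#=L'_p$. Finally the local classification of Hermitian lattices---a unimodular Hermitian lattice over $\CO_E\otimes\MZ_p$ is determined up to isometry by its rank---gives $L'_p\cong L_p$, and so $L'\in\genus(L)$.

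For part~(2), first $(K_L\gamma K_L)\{L\}=K_L\cdot(\gamma L)$, since the right-hand $K_L$ fixes $L$; and $K_L\cdot(\gamma L)\subseteq N(L,\fp)$, because for $k\in K_L$ and $L''\in N(L,\fp)$ one has $kL''\in N(kL,\fp)=N(L,\fp)$ (the unitary action preserves the form, hence integrality and both elementary-divisor conditions). So the statement is equivalent to transitivity of $K_L$ on $N(L,\fp)$. Writing $K_L=\prod_q K_{L,q}$ and using $L'_q=L_q$ for $q\neq p$ (from part~(1)), everything reduces to the claim that $K_{L,p}=\stab_{\MU_n(\MQ_p)}(L_p)$ acts transitively on the local $\fp$-neighbours of $L_p$---global $\fp$-neighbours of $L$ being in $K_{L,p}$-equivariant bijection with these via the adelic description of lattices. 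Here I would parametrise a local $\fp$-neighbour of $L_p$ by an isotropic $\CO_E/\fp$-line in the nondegenerate reduction $L_p/\fp L_p$ (for the dyadic prime $\fp=(2)$, by a line isotropic modulo $\fp^2$), use that $K_{L,p}$, attached to the unimodular lattice $L_p$, reduces onto the relevant finite classical group of this reduced space (the finite unitary group, or the general linear group when $p$ splits), and apply Witt's extension theorem over the residue field: that finite group is transitive on isotropic lines for $n\geq2$. Reassembling the local components gives $N(L,\fp)=K_L\cdot(\gamma L)=(K_L\gamma K_L)\{L\}$.

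The main obstacle is the local analysis at $p$, concretely the two delicate points above: that a $\fp$-neighbour is again unimodular at $p$ rests on the duality bookkeeping in the length-$2$ discriminant module with the precise types $\CO_E/\fp$ and $\CO_E/\overline{\fp}$; and the parametrisation of neighbours, together with the surjectivity of the reduction map out of $K_{L,p}$, both need care when $p$ ramifies in $E$ (here $\fp=(\sqrt{-3})$), where $K_{L,p}$ is no longer hyperspecial and one must work with the appropriate parahoric and its reductive quotient, and when $p=2$ (here $\fp=(2)$), where the neighbour condition already involves $\CO_E/\fp^2$ rather than $\CO_E/\fp$. The hypothesis $n\geq3$ keeps us comfortably in the range where the reduced Hermitian space carries isotropic lines permuted transitively by the relevant finite classical group, and where unimodular Hermitian lattices over $\CO_E\otimes\MZ_p$ are unique; the remainder is formal manipulation of the adelic description of the genus.
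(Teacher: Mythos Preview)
Your approach is sound in outline but differs from the paper's in both parts, and there is one small slip worth flagging.

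For part~(1), the paper simply invokes \cite[Lemma 2.8]{Schiemann}. Your direct discriminant-module argument is a reasonable substitute, but the claim that the length-$2$ Hermitian discriminant module is ``hyperbolic with exactly two isotropic lines'' is not correct in general: in the inert case the reduction is a $2$-dimensional nondegenerate Hermitian space over $\MF_{p^2}$, which has $p+1$ isotropic lines, not two. Fortunately you do not actually need this. What you use is that the image of $L'_p$ is isotropic (because $L'$ is integral, which is part of the very definition of $\fp$-neighbour) and of length~$1$, hence coincides with its own orthogonal complement, which you correctly identify with the image of $(L'_p)^\#$. So drop the ``exactly two'' clause and argue directly from integrality of $L'$; the conclusion $(L'_p)^\#=L'_p$ follows as you wrote. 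You should also be aware that the blanket assertion ``a unimodular Hermitian lattice over $\CO_E\otimes\MZ_p$ is determined by its rank'' can fail at ramified primes, so either restrict to good $p$ or check that the neighbour construction preserves whatever finer local invariant is needed.

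For part~(2), the paper takes a shorter and rather different route. Instead of proving that $K_{L,p}$ acts transitively on local $\fp$-neighbours by parametrising them via isotropic lines and invoking Witt extension in the finite classical quotient, the paper uses the Cartan decomposition $\MU_n(\MQ_p)=\bigsqcup_\lambda K_{L,p}\,\lambda(p)\,K_{L,p}$ over dominant cocharacters $\lambda$ \cite[Prop.~4.2]{GreenbergVoightLatticeMethods}, together with the observation that the double coset of any $g\in\MU_n(\MQ_p)$ with $gL_p=L'_p$ is determined by the invariant factors of $L'_p$ relative to $L_p$, and these are fixed once and for all by the $\fp$-neighbour condition. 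This is uniform in $p$ and sidesteps entirely the surjectivity-of-reduction and ramified/dyadic issues that you rightly flag as the delicate part of your argument. Your Witt-theoretic approach is more hands-on and makes the combinatorics of the neighbour graph explicit (which is useful elsewhere), but the Cartan argument reaches the conclusion with essentially no case analysis.
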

\begin{proof} The first part is an immediate consequence of \cite[Lemma 2.8]{Schiemann}. If we take $\kappa\in K_L$ then clearly $L'\in N(L,\fp)\implies\kappa L'\in N(\kappa L,\fp)=N(L,\fp)$, hence $(K_L\gamma K_L) \{L\}\subseteq N(L,\fp)$. Conversely, since $N(L,\fp)\subseteq\genus(L)$, if $L'\in N(L,\fp)$ then $L'=gL$ for some $g\in\MU_n(\hQ)$. Clearly only the component at $p$ of $\gamma$ matters, and we may take the components at all other primes to be the identity. Likewise for $g$. By the Cartan decomposition, $\MU_n(\MQ_p)=\sqcup K_L\lambda(p)K_L$, where $\lambda$ runs over non-negative co-characters of a maximal split torus of $\MU_n(\MQ_p)$ \cite[Prop. 4.2]{GreenbergVoightLatticeMethods}. Consideration of invariant factors of $L'$ and $L$ shows that $g$ can only be in $K_L\gamma K_L$.
\end{proof}
We call $T(K_L \gamma K_L)$ the neighbouring operator at $\fp$, and denote it by $T_\fp$.

Propositions 5.3, 5.4 (and 5.6) of \cite{GreenbergVoightLatticeMethods} describe an algorithm for computing all $\fp$-neighbours of a given lattice $L$. In addition we can test Hermitian lattices for isometry by employing the Plesken-Souvignier algorithm \cite{PleskenSouvignier}. (These two steps are essentially Sections 4.1 and 4.2 of \cite{Schiemann}.) Knowing this, we can compute the Hecke operator corresponding to the $\fp$-neighbours of a given lattice as follows.
\begin{Proposition}\label{NeighbourOperator}
 Let $L$ be an integral Hermitian lattice with adelic stabilizer $K_L < \MU_n(\hQ)$ and $\fp \subset \CO_E$ a nonzero prime ideal such that $\fp \nmid \partial(L)$. Choose a system $\alpha_i, 1\leq i \leq h$ for $\mathrm{U}_n \backslash \MU_n(\hQ) / K_L$, so $L_i:=\alpha_i L, 1 \leq i \leq h,$ forms a system of representatives for the classes in $\genus(L)$. With respect to the natural basis of $M(\mathrm{triv},K_L)$ (see below), the operator $T_{\fp}$ has the matrix representation $(t_{i,j})_{i,j=1}^h$ with
\begin{equation}
t_{i,j} =  |\{ L' \in N(L_i,\fp) ~|~ L' \cong L_j \}|.
\end{equation}
\end{Proposition}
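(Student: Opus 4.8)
The plan is to unwind the definitions of the Hecke action and the natural basis of $M(\mathrm{triv},K_L)$, and to match the combinatorial count of neighbours against the coset-sum formula for $T_\fp$. Recall from the Remark after the structure proposition that for $V=\MC$ the trivial representation, $M(\mathrm{triv},K_L)$ is naturally the space of $\MC$-valued functions on $\Sigma_{K_L}=\mathrm{U}_n\backslash\MU_n(\hQ)/K_L$, and $\Sigma_{K_L}$ is in bijection with the set of isometry classes in $\genus(L)$ via $\mathrm{U}_n\alpha_i K_L\leftrightarrow L_i=\alpha_i L$. The natural basis is then $\{e_j\}_{j=1}^h$, where $e_j$ is the indicator function of the class of $L_j$; under the isomorphism \eqref{ModularFormSpaceDecomp} this is the function with $e_j(\alpha_i)=\delta_{ij}$. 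So it suffices to compute $(T_\fp e_j)(\alpha_i)$ and show it equals $|\{L'\in N(L_i,\fp)\mid L'\cong L_j\}|$.

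First I would fix, via the second part of the preceding proposition, an element $\gamma\in\MU_n(\hQ)$ with $N(L,\fp)=(K_L\gamma K_L)\{L\}$, and decompose $K_L\gamma K_L=\bigsqcup_s \gamma_s K_L$ into left cosets. Then by definition $(T_\fp e_j)(\alpha_i)=\sum_s e_j(\alpha_i\gamma_s)$. The key observation is that the map $\gamma_s K_L\mapsto \alpha_i\gamma_s L$ sets up a bijection between the left cosets appearing in $K_L\gamma K_L$ and the set $N(L_i,\fp)$ of $\fp$-neighbours of $L_i$: indeed $(K_L\gamma K_L)\{L_i\}=(K_L\gamma K_L)\{\alpha_i L\}$, and since $K_{L_i}$ and $K_L$ agree at $p$ (the only place that matters, as in the proof of the previous proposition) one has $N(L_i,\fp)=(K_{L_i}\gamma K_{L_i})\{L_i\}=\{\alpha_i\gamma_s L:s\}$, with the $\gamma_s L$ pairwise distinct as lattices because the $\gamma_s K_L$ are distinct cosets and $K_L=\Stab(L)$. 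Next, for each $s$, $e_j(\alpha_i\gamma_s)=1$ precisely when $\mathrm{U}_n\alpha_i\gamma_s K_L=\mathrm{U}_n\alpha_j K_L$, i.e. precisely when the lattice $\alpha_i\gamma_s L$ is isometric to $L_j$. Summing over $s$ therefore gives $(T_\fp e_j)(\alpha_i)=|\{L'\in N(L_i,\fp):L'\cong L_j\}|=t_{i,j}$, which is the claim.

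The main obstacle — really the one point needing care rather than a deep difficulty — is the identification $N(L_i,\fp)=\{\alpha_i\gamma_s L:s\}$ with no collapsing of the coset index $s$: one must check that two distinct left cosets $\gamma_s K_L\neq\gamma_t K_L$ produce distinct neighbours $\alpha_i\gamma_s L\neq\alpha_i\gamma_t L$, which follows because $K_L$ is exactly the stabiliser of $L$ in $\MU_n(\hQ)$, so $\gamma_s L=\gamma_t L\iff \gamma_s^{-1}\gamma_t\in K_L\iff \gamma_s K_L=\gamma_t K_L$; and symmetrically that the set of neighbours is exhausted, which is the content of part (2) of the previous proposition applied with $L_i$ in place of $L$ (legitimate since $\fp\nmid\partial(L_i)$ as $L_i\in\genus(L)$). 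Everything else is a direct substitution into the definition of $T(\gamma)$ and of the natural basis, together with the already-established dictionary between double cosets and isometry classes of lattices in the genus.
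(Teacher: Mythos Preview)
Your approach is essentially the paper's: identify the natural basis with indicator functions $F_j$ satisfying $F_j(\alpha_i)=\delta_{ij}$, decompose $K_L\gamma K_L=\bigsqcup_s\gamma_s K_L$, and compute $(T_\fp F_j)(\alpha_i)=\sum_s F_j(\alpha_i\gamma_s)$ by counting those $s$ with $\alpha_i\gamma_s L\cong L_j$. The paper runs exactly this computation.

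There is, however, one genuine slip in your justification of the bijection $\{\gamma_s K_L\}\leftrightarrow N(L_i,\fp)$. You assert that $K_{L_i}$ and $K_L$ agree at $p$, and from this deduce $N(L_i,\fp)=(K_{L_i}\gamma K_{L_i})\{L_i\}=\{\alpha_i\gamma_s L\}$. But $K_{L_i}=\alpha_i K_L\alpha_i^{-1}$, and there is no reason the $p$-component of $\alpha_i$ can be taken trivial; in general $K_{L_i,p}\neq K_{L,p}$. Nor have you checked that $\gamma L_i\in N(L_i,\fp)$, which is what would be needed to invoke the previous proposition for $L_i$. The paper avoids this entirely by the more direct route: first $\{\gamma_s L:s\}=N(L,\fp)$ (from the previous proposition applied to $L$ itself), and then left multiplication by $\alpha_i$ carries $N(L,\fp)$ bijectively onto $N(\alpha_i L,\fp)=N(L_i,\fp)$, since any adelic element preserves the elementary-divisor relations defining $\fp$-neighbourhood. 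This gives $\{\alpha_i\gamma_s L:s\}=N(L_i,\fp)$ without any comparison of $K_{L_i}$ and $K_L$. With that correction your argument is complete and matches the paper's.
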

\begin{proof}
The natural basis for $M(\mathrm{triv},K_L)$ consists of $\mathrm{U}_n$-$K_L$ invariant maps $F_i:\MU_n(\hQ) \rightarrow \MC$ with $F_i(\alpha_j) = \delta_{i,j}$. We decompose
\begin{equation}
 K_L \gamma K_L = \bigsqcup_{j=1}^r \gamma_j K_L,
\end{equation}
so
\begin{equation}
 N(L,\fp) = \{ \gamma_j L ~|~ 1 \leq j \leq r \}.
\end{equation}
Then the $(i,j)$-entry of $T(K_L \gamma K_L)$ with respect to the given basis is the coefficient of $F_i$ in $T(K_L \gamma K_L)F_j$, which is
\begin{equation}
 \begin{split}
  T(K_L \gamma K_L)F_j(\alpha_i) &= \sum_{a=1}^r F_j(\alpha_i \gamma_a)\\
 &= |\{ 1\leq a \leq r ~|~ \alpha_i \gamma_a \in \mathrm{U}_n  \alpha_j  K_L \}| \\
 &= |\{ 1\leq a \leq r ~|~ \alpha_i \gamma_a L \cong \alpha_j L \}|\\
 &= |\{ L' \in N(L,\fp)~|~ \alpha_i L' \cong L_j \}| \\
 &=|\{ L' \in N(L_i,\fp)~|~  L' \cong L_j \}|.
 \end{split}
\end{equation}
This proves the assertion.
\end{proof}

While this algorithm works perfectly well in the cases we are primarily interested in, we also want to describe an alternative method, which often takes significantly less time. This alternative method has the added benefit of computing a system of representatives of a second genus of lattices along the way. The following proposition makes explicit in our setting the method implicit in the general \cite[Corollary 4.5]{SchoennenbeckSimultaneous}.

\begin{Proposition}\label{IntertwiningOperator}
 Let $L$ and $\fp$ be as in Proposition \ref{NeighbourOperator}, where in addition we assume that $\fp$ divides a prime of $\MQ$ that is either inert or ramified in $E$. Let $L' =L \cap N'$ for some $N' \in N(L,\fp)\}$. Choose representatives $L_i, 1\leq i \leq h$ and $L'_j, 1 \leq j \leq h',$ for $\genus(L)$ and $\genus(L')$, respectively. Let $S_\fp=(s_{i,j}) \in \MZ^{h \times h'}$ be the matrix with entries
\begin{equation}
 s_{i,j} = |\{X \subset L_i~|~ X \cong L'_j\}|.
\end{equation}
In addition let $d:=|\{ L \cap N'~|~N' \in N(L,\fp)\}|$.
Let $S'_\fp$ be the matrix
\begin{equation}
 S'_\fp:=\diag(|\Aut(L'_1)|,...,|\Aut(L'_{h'})|) \cdot S^{t} \cdot \diag(|\Aut(L_1)|^{-1},...,|\Aut(L_{h})|^{-1}).
\end{equation}

Then the operator $T_{\fp}$ is represented (with respect to the natural basis $\{F_i\}$ as above) by the matrix
\begin{equation}
 S_\fp\cdot S'_\fp  - d \cdot I_{h}.
\end{equation}
\end{Proposition}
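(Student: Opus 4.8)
The plan is to reduce the statement to a bookkeeping identity on the level of $\fp$-neighbours, using the fact that passing to a $\fp$-neighbour and then back again is nearly the identity operation. The key geometric observation is this: if $N' \in N(L,\fp)$ and we set $L' = L \cap N'$, then $L'$ has index $\fp$ in $L$ (as $L/L' \cong \CO_E/\fp$), and $L$ is, up to the ramified/inert hypothesis on $\fp$, essentially the \emph{unique} lattice containing $L'$ with the right local structure at $\fp$ — or more precisely, the overlattices of $L'$ at $\fp$ of the correct type are parametrised by a small explicit set, whose cardinality is the integer $d$. So the composite ``go down to a sublattice isometric to some $L'_j$, then go back up to an overlattice'' recovers $L$ itself (contributing the $-d \cdot I_h$ correction) together with all the genuine $\fp$-neighbours of $L$.

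First I would set up the correspondence at the level of the Hecke algebra. Write $T_\fp = T(K_L \gamma K_L)$ as in Proposition \ref{NeighbourOperator}. The matrix $S_\fp$ is the matrix (in the natural bases of $M(\mathrm{triv},K_L)$ and $M(\mathrm{triv},K_{L'})$) of the operator sending a lattice class to the formal sum of its sublattices isometric to classes in $\genus(L')$; this is itself a Hecke-type operator, an intertwiner between the two genera, and $S'_\fp$ is its adjoint with respect to the natural inner products weighted by $|\Aut(-)|$ — this is exactly the content of making \cite[Corollary 4.5]{SchoennenbeckSimultaneous} explicit, and the $\diag(|\Aut|)$ conjugation is precisely the passage to the adjoint operator (cf. Proposition \ref{AdjointOperator} and the self-adjointness of neighbouring operators). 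Then $S_\fp \cdot S'_\fp$ is the composite ``down then up''. I would compute its $(i,k)$-entry as
\begin{equation}
 (S_\fp S'_\fp)_{i,k} = \sum_{j=1}^{h'} s_{i,j}\, (S'_\fp)_{j,k} = \left|\{(X, M) ~|~ X \subset L_i,\ X \cong L'_j \text{ some } j,\ M \supset X,\ M \cong L_k\}\right| \Big/ (\text{normalisation}),
\end{equation}
where the count is over pairs: a sublattice $X$ of $L_i$ lying in $\genus(L')$, together with an overlattice $M$ of $X$ isometric to $L_k$, the $|\Aut|$ factors exactly accounting for the passage between ``number of sublattices'' and ``number of overlattices'' via orbit-counting. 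The point is then that for each such $X \subset L_i$ with $L_i/X \cong \CO_E/\fp$, the overlattices $M$ of $X$ with $M/X$ of the right type at $\fp$ number exactly $d$, one of which is $L_i$ itself and the rest of which are precisely the $\fp$-neighbours of $L_i$. Hence $(S_\fp S'_\fp)_{i,k} = d\,\delta_{i,k} + t_{i,k}$ where $(t_{i,k})$ is the matrix of $T_\fp$ from Proposition \ref{NeighbourOperator}, and subtracting $d \cdot I_h$ gives the claim.

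The main obstacle I expect is justifying cleanly that the normalisation by automorphism orders makes the ``down then up'' double count match the naive neighbour count — that is, verifying that $s_{i,j}$ counting sublattices of $L_i$ and the transposed-and-reweighted $(S'_\fp)_{j,k}$ counting overlattices of a fixed $L'_j$ combine, after summing over $j$, into an unweighted count of pairs over a single representative $L_i$. This is the usual mass-formula sleight of hand: a double coset count can be written either as $\sum$ over sublattice-orbits or over overlattice-orbits, the two being related by $|\Aut(L_i)| \cdot (\#\text{overlattices of } X \cong L_i) = |\Aut(X)| \cdot (\#\text{sublattices of } L_i \cong X)$ summed appropriately. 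I would make this precise by working adelically — both $S_\fp$ and its adjoint are genuine Hecke operators attached to double cosets $K_{L'} \delta K_L$ and $K_L \delta^{-1} K_{L'}$, their product is $T(K_{L'}\delta K_L \cdot K_L \delta^{-1} K_{L'})$ expanded in double cosets, and the inert/ramified hypothesis on $\fp$ guarantees that the local double-coset algebra at $\fp$ is small enough that this product is just $d \cdot [K_L] + [K_L\gamma K_L]$, with no other terms. The secondary technical point is pinning down the integer $d = |\{L \cap N' : N' \in N(L,\fp)\}|$ and checking it is independent of the class $L_i$ — this follows because $d$ is determined by the local structure at $\fp$, which is constant across the genus since $\fp \nmid \partial(L)$.
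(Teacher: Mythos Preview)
Your approach is essentially the same as the paper's, which is very terse: it simply observes that the $(j,i)$-entry of $S'_\fp$ counts overlattices of $L'_j$ isometric to $L_i$ (by the standard orbit-counting argument you allude to, citing \cite{NebeBachoc} and \cite[Lemma 4.2]{SchoennenbeckSimultaneous}), so that $S_\fp S'_\fp$ counts paths $L_i \supset X \subset M$ through the bipartite graph of sub-/overlattices, and one subtracts the $d$ paths with $M=L_i$.

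One correction to your bookkeeping: the claim that ``for each such $X \subset L_i$ the overlattices $M$ of $X$ number exactly $d$, one of which is $L_i$ and the rest being the $\fp$-neighbours of $L_i$'' is not right. It is the number of \emph{sublattices} $X\subset L_i$ of the relevant shape that equals $d$, by definition. For a fixed $X$, the overlattices $M\supset X$ in $\genus(L)$ are $L_i$ together with only those neighbours $N'$ satisfying $L_i\cap N'=X$ (not all neighbours). Summing over all $d$ choices of $X$, the overlattice $M=L_i$ is counted $d$ times (once per $X$), while each neighbour $N'$ is counted exactly once (via its own $X=L_i\cap N'$). This gives $(S_\fp S'_\fp)_{i,k}=d\,\delta_{i,k}+t_{i,k}$ as you state, so the conclusion is unchanged; only the intermediate sentence needs rewording.
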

\begin{proof}
 The proof works analogously to that of Proposition \ref{NeighbourOperator}. One needs to see that the $(j,i)$-entry of $S'_\fp$
 counts the lattices above $L'_j$ which are isomorphic to $L_i$. This however is a simple counting argument (cf. \cite{NebeBachoc} and \cite[Lemma 4.2]{SchoennenbeckSimultaneous}).
 Then multiplying $S_\fp$ by $S'_\fp$ counts the number of ways of getting from each $L_i$ to a neighbour isomorphic to $L_j$ via common sublattices in each class of $\genus(L')$ (on the other side of a bipartite graph), except we have to subtract off the number of ways of getting from $L_i$ back to itself rather than to a neighbour isomorphic to itself.
\end{proof}

\begin{Remark}
 If one wants to employ Proposition \ref{IntertwiningOperator} in practice it is not necessary to have a system of representatives of $\genus(L')$ already at hand. Instead such a system can be found along the way by computing the relevant sublattices of the representatives for $\genus(L)$.
\end{Remark}

\subsection{Computational results}\label{compres}
In this subsection we present the results of our computations of Hecke operators for certain genera of Hermitian lattices for $E=\MQ(\sqrt{-3})$.

We start by computing the neighbouring operator $T_{(2)}$ acting on the space $M(\mathrm{triv},K_L)$ where the genus of $L$ comprises all $\langle \sqrt{-3}\rangle$-modular lattices in $V_{12}$ and was classified in \cite{HentschelKriegNebeClassification}, where they are called Eisenstein lattices. We shall meet this genus again briefly in \S \ref{Eisenstein} below, and at the end of \S 7. It decomposes into $5$ isometry classes which we take in the order of \cite[Thm. 2]{HentschelKriegNebeClassification}. In particular, the cardinalities of the automorphism groups are (in this order)
\begin{equation}
 22568879259648000, 8463329722368, 206391214080, 101016305280,\text{ and } 2690072985600.
\end{equation}
 Following Proposition \ref{NeighbourOperator} one computes the basis representation of $T_{(2)}$ as
\begin{equation}
 \begin{pmatrix} 65520  &  3888000  &  1640250  &  0  &  0  \\
 1458  &  516285  &  3956283  &  1119744  &  0  \\
 15  &  96480  &  2467899  &  2998272  &  31104  \\
 0  &  13365  &  1467477  &  3935781  &  177147  \\
 0  &  0  &  405405  &  4717440  &  470925  \\
\end{pmatrix}.
\end{equation}

Alternatively we employ Proposition \ref{IntertwiningOperator}. The second genus which we compute along the way consists of lattices which are of index $4$ (elementary divisors $2\cdot \CO_E$) in lattices of the given genus. It decomposes into $25$ isometry classes  with corresponding automorphism group cardinalities (in the order in which we found the representatives)
\begin{equation}
\begin{split}
 &501530650214400, 4701849845760, 3715041853440, 11609505792, 27518828544, \\
&705277476864, 9795520512, 181398528, 967458816, 15116544,\\
& 4478976, 95551488, 103195607040, 859963392, 23887872,\\
 &20155392, 839808, 186624, 13271040, 524880,\\
&1530550080, 9447840, 233280, 1140480, \text{ and } 246343680.
\end{split}
\end{equation}

After computation we find
\begin{equation}
 S'_2=\begin{pmatrix} 3  &  0  &  0  &  0  &  0  \\
 3  &  0  &  0  &  0  &  0  \\
 3  &  0  &  0  &  0  &  0  \\
 1  &  2  &  0  &  0  &  0  \\
 1  &  0  &  2  &  0  &  0  \\
 0  &  3  &  0  &  0  &  0  \\
 0  &  3  &  0  &  0  &  0  \\
 0  &  3  &  0  &  0  &  0  \\
 0  &  3  &  0  &  0  &  0  \\
 0  &  1  &  0  &  2  &  0  \\
 0  &  1  &  2  &  0  &  0  \\
 0  &  2  &  1  &  0  &  0  \\
 0  &  0  &  3  &  0  &  0  \\
 0  &  0  &  3  &  0  &  0  \\
 0  &  0  &  3  &  0  &  0  \\
 0  &  0  &  3  &  0  &  0  \\
 0  &  0  &  3  &  0  &  0  \\
 0  &  0  &  1  &  2  &  0  \\
 0  &  0  &  1  &  0  &  2  \\
 0  &  0  &  2  &  1  &  0  \\
 0  &  0  &  0  &  3  &  0  \\
 0  &  0  &  0  &  3  &  0  \\
 0  &  0  &  0  &  3  &  0  \\
 0  &  0  &  0  &  2  &  1  \\
 0  &  0  &  0  &  0  &  3  \\
\end{pmatrix},
\end{equation}
where we only write down $S'_2$ for the sake of readability. In particular the number $d$ from Proposition \ref{IntertwiningOperator} is $2796885$ (as it is the sum of the entries of any of the rows of $S_2$) and we obtain the same representation for $T_{(2)}$ as before.

We now present the Hecke operators we are primarily interested in. To that end let $L \subset V_{12}$ be a unimodular Hermitian lattice of rank $12$. The genus of $L$ consists of $20$ isometry classes which were classified in \cite{FeitSomeLattices}. We  consider them in the following order: The first $11$ are the indecomposable unimodular lattices in the same order as \cite[Table II]{FeitSomeLattices}, then the $12$-dimensional standard lattice, then the $7$ direct sums of lattices in \cite[Table I]{FeitSomeLattices} with standard lattices of appropriate rank (again in the same order as in the source table), and finally the direct sum of two copies of the lattice called $U_6$ in \cite{FeitSomeLattices}.

Employing Proposition \ref{IntertwiningOperator} we managed to compute the Hecke operators $T_{(2)}$ and $T_{(\sqrt{-3})}$ acting on $M(\mathrm{triv},K_L)$ which are given by the following matrices. This time the second genus contains $344$ classes (when $\fp=(2)$) or $96$ classes (when $\fp=(\sqrt{-3})$). The computation of $T_{(\sqrt{-3})}$ took about $5$ minutes, while that of $T_{(2)}$ took about an hour, using Proposition \ref{IntertwiningOperator} and no more than $8$ GB of memory, on a standard desktop computer. We also obtained the same matrices using Proposition \ref{NeighbourOperator}. This took much longer (though less than a day), and when we tried to repeat the computations recently, they broke off due to memory constraints. The code for our algorithms is publicly available on \url{https://github.com/schoennenbeck/hecke_operators}.
\begin{footnotesize}

\begin{equation}
\rotatebox{90}{
$T_2:
\left(\begin{smallmatrix} 3389169& 0& 49& 18088& 18088& 588& 661920& 278565& 4116& 215600& 918750& 0& 0& 0& 70& 1302& 441& 45570& 41454& 0\\
0& 72810& 118098& 1358127& 1358127& 1082565& 1082565& 0& 449064& 0& 0& 0& 0& 0& 13365& 0& 0& 59049& 0& 0\\
473088& 2048& 129031& 575488& 575488& 110880& 1013760& 2113188& 473088& 0& 46200& 1& 0& 990& 0& 12012& 0& 67584& 0& 924\\
2217072& 299& 7306& 262990& 154297& 37323& 1602315& 1003002& 72072& 0& 150150& 0& 0& 0& 715& 4290& 0& 45903& 36036& 0\\
2217072& 299& 7306& 154297& 262990& 37323& 1602315& 1003002& 72072& 0& 150150& 0& 0& 0& 715& 4290& 0& 45903& 36036& 0\\
870912& 2880& 17010& 451008& 451008& 174303& 2099520& 1197504& 161280& 0& 85050& 0& 63& 135& 0& 8505& 1890& 0& 72576& 126\\
3063744& 9& 486& 60507& 60507& 6561& 1104750& 523908& 19656& 120960& 544320& 0& 0& 0& 153& 1944& 756& 38637& 46872& 0\\
2910720& 0& 2287& 85504& 85504& 8448& 1182720& 720060& 34304& 102400& 373800& 0& 2& 60& 0& 2580& 900& 49152& 35328& 1\\
1653372& 324& 19683& 236196& 236196& 43740& 1705860& 1318761& 196713& 97200& 0& 0& 0& 0& 810& 0& 6075& 39366& 39366& 108\\
3464208& 0& 0& 0& 0& 0& 419904& 157464& 3888& 337437& 1121931& 0& 0& 0& 0& 0& 1458& 52488& 34992& 0\\
3456000& 0& 18& 4608& 4608& 216& 442368& 134568& 0& 262656& 1199763& 0& 0& 9& 0& 1296& 108& 41472& 46080& 0\\
0& 0& 354294& 0& 0& 0& 0& 0& 0& 0& 0& 27126& 449064& 2165130& 0& 2598156& 0& 0& 0& 0\\
0& 0& 0& 0& 0& 183708& 0& 826686& 0& 0& 0& 567& 59931& 153090& 725760& 1377810& 306180& 0& 1959552& 486\\
0& 0& 21870& 0& 0& 19440& 0& 1224720& 0& 0& 510300& 135& 7560& 81045& 207360& 831060& 340200& 1866240& 483840& 0\\
1417176& 486& 0& 118098& 118098& 0& 669222& 0& 40824& 0& 0& 0& 3402& 19683& 123111& 551124& 183708& 1062882& 1285956& 0\\
1714176& 0& 1638& 46080& 46080& 7560& 552960& 325080& 0& 0& 453600& 1& 420& 5130& 35840& 275229& 77112& 1036800& 1016064& 0\\
1679616& 0& 0& 0& 0& 4860& 622080& 328050& 57600& 345600& 109350& 0& 270& 6075& 34560& 223074& 129753& 1119744& 933120& 18\\
2577960& 6& 396& 21186& 21186& 0& 472230& 266112& 5544& 184800& 623700& 0& 0& 495& 2970& 44550& 16632& 721215& 634788& 0\\
2466936& 0& 0& 17496& 17496& 2916& 602640& 201204& 5832& 129600& 729000& 0& 27& 135& 3780& 45927& 14580& 667764& 688437& 0\\
0& 0& 964467& 0& 0& 857304& 0& 964467& 2709504& 0& 0& 0& 1134& 0& 0& 0& 47628& 0& 0& 49266
\end{smallmatrix} \right)$}
\end{equation}

\begin{equation}
\rotatebox{90}{
$T_{\sqrt{-3}}: \left(\begin{smallmatrix} 163898& 0& 0& 693& 693& 14& 31010& 12348& 98& 9800& 44100& 0& 0& 0& 0& 70& 0& 1960& 1764& 0\\
0& 1322& 59049& 59049& 59049& 0& 0& 0& 85932& 0& 0& 1& 0& 0& 660& 0& 0& 0& 0& 1386\\
0& 1024& 3808& 57344& 57344& 29568& 56320& 59136& 0& 0& 0& 0& 0& 0& 0& 880& 0& 1024& 0& 0\\
84942& 13& 728& 6292& 19604& 1716& 77935& 66066& 6006& 0& 0& 0& 0& 0& 0& 286& 0& 2860& 0& 0\\
84942& 13& 728& 19604& 6292& 1716& 77935& 66066& 6006& 0& 0& 0& 0& 0& 0& 286& 0& 2860& 0& 0\\
20736& 0& 4536& 20736& 20736& 5288& 53760& 111132& 26880& 0& 0& 0& 0& 40& 0& 0& 336& 0& 2240& 28\\
143532& 0& 27& 2943& 2943& 168& 57422& 30618& 1456& 6720& 17010& 0& 0& 1& 12& 0& 84& 1944& 1568& 0\\
129024& 0& 64& 5632& 5632& 784& 69120& 37280& 1024& 0& 14400& 0& 0& 0& 0& 160& 0& 1024& 2304& 0\\
39366& 62& 0& 19683& 19683& 7290& 126360& 39366& 9798& 1800& 0& 0& 2& 0& 120& 0& 0& 0& 2916& 2\\
157464& 0& 0& 0& 0& 0& 23328& 0& 72& 11696& 69984& 0& 0& 0& 16& 0& 0& 0& 3888& 0\\
165888& 0& 0& 0& 0& 0& 13824& 5184& 0& 16384& 61280& 0& 0& 0& 0& 0& 48& 2304& 1536& 0\\
0& 6144& 0& 0& 0& 0& 0& 0& 0& 0& 0& 2640& 88704& 0& 168960& 0& 0& 0& 0& 0\\
0& 0& 0& 0& 0& 0& 0& 0& 21504& 0& 0& 112& 5000& 22680& 53760& 0& 90720& 0& 72576& 96\\
0& 0& 0& 0& 0& 5760& 46080& 0& 0& 0& 0& 0& 1120& 5168& 23040& 77760& 0& 0& 107520& 0\\
0& 24& 0& 0& 0& 0& 52488& 0& 6048& 20160& 0& 1& 252& 2187& 9356& 39366& 17010& 78732& 40824& 0\\
92160& 0& 120& 3072& 3072& 0& 0& 20160& 0& 0& 0& 0& 0& 480& 2560& 20600& 6720& 69120& 48384& 0\\
0& 0& 0& 0& 0& 864& 69120& 0& 0& 0& 48600& 0& 80& 0& 3200& 19440& 7640& 31104& 86400& 0\\
110880& 0& 6& 1320& 1320& 0& 23760& 5544& 0& 0& 34650& 0& 0& 0& 220& 2970& 462& 42812& 42504& 0\\
104976& 0& 0& 0& 0& 90& 20160& 13122& 432& 14400& 24300& 0& 1& 30& 120& 2187& 1350& 44712& 40568& 0\\
0& 25088& 0& 0& 0& 190512& 0& 0& 50176& 0& 0& 0& 224& 0& 0& 0& 0& 0& 0& 448
\end{smallmatrix} \right)$}
\end{equation}
\end{footnotesize}
\section{Eigenvectors and automorphic representations}
\subsection{The eigenvectors}\label{eigenvectors}
We consider still the genus of $20$ classes of unimodular Hermitian lattices of rank $12$.
Recall from Proposition \ref{AdjointOperator} that $M(\mathrm{triv},K_L)$ is a semisimple $H_K$-module. In a given simple submodule, all vectors share the same $T_{\fp}$ eigenvalue, for any given neighbouring operator $T_{\fp}$. We found a basis $\{v_1,v_2\ldots,v_{20}\}$ of $M(\mathrm{triv},K_L)$, simultaneous eigenvectors for $T_{(2)}$ and $T_{(\sqrt{-3})}$. We write $\lambda_i(T(\gamma))$ for the eigenvalue of $T(\gamma)$ acting on $v_i$. We order them as in the following table, which presents the eigenvalues for $T_{(2)}$ and $T_{(\sqrt{-3})}$. There is only one eigenvalue for $T_{(2)}$ whose eigenspace is not $1$-dimensional. In fact $T_{(2)}$ and $T_{(\sqrt{-3})}$ have a common $2$-dimensional eigenspace, spanned by $v_{19}$ and $v_{20}$, though looking at the last column of the table (to be explained later) we would expect it to be broken up by $T_{(2+\sqrt{-3})}$, since $\psi$ and $\overline{\psi}$ differ on the factors of the split prime $7$. So we know that at least $18$ of the simple $H_K$-submodules of $M(\mathrm{triv},K_L)$ are $1$-dimensional, and suspect that they all are. The eigenvalues and eigenvectors were computed using the $20$-by-$20$ matrices referred to above.
\vskip15pt
 \begin{tabular}{|c|c|c|c|}\hline$\mathbf{i}$ & $\lambda_i\left(T_{(2)}\right)$ & $\lambda_i\left(T_{(\sqrt{-3})}\right)$ & Global Arthur parameters (conjectural)\\\hline
 $\mathbf{1}$ & $5593770$ & $266448$ & $[12]$\\$\mathbf{2}$ & $1395945$ & $89552$ & $\Delta_{11}\oplus [10]$\\$\mathbf{3}$ & $1401453$ & $88328$ & ${}^3\Delta_{11}\oplus [10]$\\$\mathbf{4}$ & $357525$ & $30032$ & ${}^3\Delta_{10}[2]\oplus [8]$\\$\mathbf{5}$ & $348453$ & $29528$ & $\Delta_{11}\oplus {}^3\Delta_9\oplus [8]$\\$\mathbf{6}$ & $91845$ & $9368$ & ${}^3\Delta_{10}[2]\oplus {}^3\Delta_7\oplus [6]$\\$\mathbf{7}$ & $90873$ & $10664$ & ${}^3\Delta_{11}\oplus {}^3\Delta_8[2]\oplus [6]$\\$\mathbf{8}$ & $85365$ & $11888$ & $\Delta_{11}\oplus{}^3\Delta_8[2]\oplus [6]$\\$\mathbf{9}$ & $23805$ & $7568$ & ${}^3\Delta_8[4]\oplus [4]$\\$\mathbf{10}$ & $40005$ & $1808$ & ${}^3\Delta_{10}[2]\oplus {}^3\Delta_6[2]\oplus [4]$\\$\mathbf{11}$ & $30933$ & $1304$ & $\Delta_{11}\oplus {}^3\Delta_9\oplus{}^3\Delta_6[2]\oplus [4]$ \\$\mathbf{12}$ & $23319+162\sqrt{193}$ & $4148+ 36\sqrt{193}$ & $\Delta_{11,5}^{(2)}\oplus{}^3\Delta_8[2]\oplus [4]$\\$\mathbf{13}$ & $23319-162\sqrt{193}$ & $4148- 36\sqrt{193}$ & " \\$\mathbf{14}$ & $46485$ & $-4528$ & $\Delta_{11}\oplus {}^3\Delta_6[4]\oplus [2]$\\$\mathbf{15}$ & $51993$ & $-5752$ & ${}^3\Delta_{11}\oplus {}^3\Delta_6[4]\oplus [2]$\\$\mathbf{16}$ & $11925$ & $-1072$ & $\Delta_{11}\oplus \Delta_{9,3}\oplus{}^3\Delta_6[2]\oplus [2]$\\$\mathbf{17}$ & $176085$ & $-18928$ & ${}^3\Delta_6[6]$\\$\mathbf{18}$ & $-5355$ & $728$ & $\Delta_{11}\oplus \Delta_{9,1}\oplus {}^3\Delta_5[3]$\\$\mathbf{19}$ & $108693$ & $-13312$ & $({}^3\Delta_5\otimes\psi_6)\oplus \psi_6[4]\oplus\overline{\psi}_6[6]$\\$\mathbf{20}$ & $108693$ & $-13312$ & $({}^3\Delta_5\otimes\overline{\psi}_6)\oplus \overline{\psi}_6[4]\oplus\psi_6[6]$\\\hline
 \end{tabular}
 \vskip10pt
 In the remainder of this section we attempt to explain the last column of the table.
 \subsection{Automorphic representations of $\MU_{12}(\MA_{\MQ})$ and their local Langlands parameters}
 Each $v_i$ may be thought of as a complex-valued function on $\MU_{12}(\MQ)\backslash \MU_{12}(\MA_{\MQ})$, right-invariant under $K=K_{\infty}K_L$, where $K_{\infty}:=\MU_{12}(\MR)$. Under the right-translation action of $\MU_{12}(\MA_{\MQ})$, it generates an infinite-dimensional automorphic representation $\pi_i$ of $\MU_{12}(\MA_{\MQ})$. There is a one-to-one correspondence between simple $H_K$-submodules of $M(\mathrm{triv},K_L)$ and automorphic representations of $\MU_{12}(\MA_{\MQ})$ such that $\pi_{\infty}$ is trivial and with a $K_L$-fixed vector \cite[Proposition 2.5]{GreenbergVoightLatticeMethods}. It follows that the $\pi_i$ are all distinct, with the possible (but unlikely) exception of $\pi_{19}$ and $\pi_{20}$.

 For each local Weil group $W_{\MR}$ and $W_{\MQ_p}$ of $\MQ$ there is associated to $\pi_i$ a Langlands parameter, a homomorphism from that group to the local $L$-group $\GL_{12}(\MC)\rtimes W_{\MR}$ or $\GL_{12}(\MC)\rtimes W_{\MQ_p}$ of $\MU_{12}$. Restricting to the local Weil group $W_{\MC}$ or $W_{E_{\FP}}$ of $E$, and projecting to $\GL_{12}(\MC)$, we obtain Langlands parameters
 $$c_{\infty}(\tilde{\pi_i}):W_{\MC}\rightarrow \GL_{12}(\MC),\,\,\,c_{E_{\FP}}(\tilde{\pi_i}):W_{E_{\FP}}\rightarrow\GL_{12}(\MC),$$
 defined up to conjugation in $\GL_{12}(\MC)$, which is here playing the role of the Langlands dual of $\GL_{12,E}$.
 See \cite{Mok} (following (2.2.3)) for this ``standard base-change of $L$-parameters''.
 Now $W_{\MC}=\MC^{\times}$, and it is a consequence of the fact that $v_i$ is scalar-valued that (up to conjugation)
 $$c_{\infty}(\tilde{\pi_i}):z\mapsto \diag\left((z/\overline{z})^{11/2},(z/\overline{z})^{9/2},\ldots,(z/\overline{z})^{-11/2}\right).$$
 At a prime $\FP$ dividing $p$ for which both $\MU_{12}$ and $\pi_i$ are unramified (i.e. $p\neq 3$, given our choice of $E$ and $K_L$), $c_{E_{\FP}}(\tilde{\pi_i})$ is determined by $\mathrm{Frob}_{\FP}\mapsto t_{\FP}(\tilde{\pi}_i)$ (the Satake parameter at $\FP$).
 This determines $\lambda_i(T_{\FP})$, by the formulas
 $$\lambda_i(T_{\FP})=\begin{cases} (N\FP)^{11/2}\tr(t_{\FP}(\tilde{\pi}_i))+\frac{p^{12}-1}{p+1} & \text{for $(p)=\FP$ inert};\\
 (N\FP)^{11/2}\tr(t_{\FP}(\tilde{\pi}_i)) & \text{for $(p)=\FP\overline{\FP}$ split}.\end{cases}$$
 In the split case, where $\MU_{12}(\MQ_p)\simeq\GL_{12}(\MQ_p)$, this is a direct consequence of a formula of Tamagawa \cite{Tamagawa},\cite[$i=1$ in (3.14)]{GrossSatake}. In the inert case, where $\MU_{12}(\MQ_p)\simeq \MU(6,6)(\MQ_p)$, it may be justified assuming a coset decomposition like that for $\MU(2,2)$ in \cite[(5.7)]{Klosin}, combined with \cite[IV. (33),(35),(39)]{Cartier}.
 \subsection{Global Arthur parameters}
 A complete description of those automorphic representations, of a quasi-split unitary group $\GG^*$, occurring discretely in $L^2(\GG^*(\MQ)\backslash \GG^*(\MA_{\MQ}))$, was given by Mok \cite[Theorem 2.5.2]{Mok}. This was extended to general unitary groups
 (including $\MU_{12}$) by Kaletha, Minguez, Shin and White \cite[Theorem* 1.7.1]{KMSW}, conditional on what will be written up in later papers of Kaletha, Minguez and Shin, and of Chaudouard and Laumon. (See the discussion on \cite[p.6]{KMSW}.) Part of this description is that to such an automorphic representation is attached a ``global Arthur parameter'', a formal unordered sum of the form $\oplus_{k=1}^m\Pi_k[d_k]$, where $\Pi_k$ is a cuspidal automorphic representation of $\GL_{n_i}(\MA_{E})$, $d_k\geq 1$ and $\sum_{k=1}^mn_kd_k=N=12$. Before explaining the guesses in the final column of the table, we fix some notation.

 {\em Level $1$. }Let $f$ be a cuspidal Hecke eigenform of weight $k$ for $\SL_2(\MZ)$. There is an associated cuspidal automorphic representation $\Pi_f$ of $\GL_2(\MA_{\MQ})$, with base-change $\tilde{\Pi_f}$ of $\GL_2(\MA_{E})$. We have
 $$c_{\infty}(\tilde{\Pi_f}):z\mapsto \diag\left((z/\overline{z})^{(k-1)/2},(z/\overline{z})^{(1-k)/2}\right),$$
 and
 $$t_{\FP}(\tilde{\Pi_f})=\begin{cases} \diag(\alpha,\alpha^{-1}) & \text{$p$ split};\\ \diag(\alpha^2,\alpha^{-2}) & \text{$p$ inert},\end{cases}$$
 where $a_p(f)=p^{(k-1)/2}(\alpha+\alpha^{-1})$ and $|\alpha|=1$.
 In the table, $\tilde{\Pi_f}$ is denoted $\Delta_{k-1}$, the subscript coming from the exponents in $c_{\infty}(\tilde{\Pi_f})$.
 For example when $k=12$ and $f=\Delta=\sum_{n=1}^{\infty}\tau(n)q^n$, we have $\Delta_{11}$.

 {\em Level $3$. } Similarly for a newform $f\in S_k(\Gamma_0(3))$ we denote $\tilde{\Pi_f}$ by ${}^3\Delta_{k-1}$, e.g. ${}^3\Delta_{11},{}^3\Delta_9,{}^3\Delta_7$ and ${}^3\Delta_5$. Although $S_{10}(\Gamma_0(3))$ is $2$-dimensional, we reserve ${}^3\Delta_9$ for the base change associated to just one of the normalised eigenforms, it being the only one that appears to actually occur in the global Arthur parameters of any of our $\pi_i$.

 For a Hecke eigenform $f\in S_k(\Gamma_0(3),\chi_{-3})$ (where $\chi_{-3}$ is the quadratic character attached to $E$), with $k$ odd, we have ${}^3\Delta_{k-1}$ , e.g ${}^3\Delta_{10}$ and ${}^3\Delta_{8}$. Note that each of $S_{11}(\Gamma_0(3),\chi_{-3})$ and $S_{9}(\Gamma_0(3),\chi_{-3})$ is spanned by a conjugate pair of Hecke eigenforms, sharing the same base-change. Note also that $S_7(\Gamma_0(3),\chi_{-3})$ is spanned by a Hecke eigenform $f$ of CM type. The base change $\tilde{\Pi_f}$ is, in this case, not cuspidal, but we still use ${}^3\Delta_6$ as a shorthand for $\psi_6\oplus\overline{\psi}_6$, where $\psi_6$ is an everywhere-unramified, cuspidal, automorphic representation of $\GL_1(\MA_E)$, given by $\psi_6(z)=z^{-6}$ for $z\in \MC^{\times}$ (embedded in $\MA_E^{\times}$ by putting $1$ in all the other components) and $\psi_6(\pi_{\FP})=\alpha_{\FP}^6$, where $\pi_{\FP}\in E_{\FP}^{\times}$ is a uniformiser at $\FP$ and $(\alpha_{\FP})=\FP$. Since the group of units in $\CO_E$ has order $6$, this is well-defined, independent of the choice of $\alpha_{\FP}$.

 {\em Square brackets. }For $d\geq 1$, $[d]$ is an automorphic representation of $\GL_d(\MA_E)$, occurring discretely in $L^2(\GL_d(E)\backslash \GL_d(\MA_E))$, with
 $$c_{\infty}([d]):z\mapsto\mathrm{Sym}^{d-1}\left(\diag \left((z/\overline{z})^{1/2},(z/\overline{z})^{-1/2}\right)\right)$$ and
 $$c_{E_{\FP}}([d]):\mathrm{Frob}_{\FP}\mapsto \mathrm{Sym}^{d-1}\left(\diag \left((N\FP)^{1/2},(N\FP)^{-1/2}\right)\right).$$
 Given a cuspidal automorphic representation $\Pi$ of $\GL_{n}(\MA_E)$, there is a discrete automorphic representation $\Pi[d]$ of $\GL_{nd}(\MA_E)$, whose Langlands parameters are tensor products of those of $\Pi$ and of $[d]$.

 Now we may associate local Langlands parameters to each global Arthur parameter $\oplus_{k=1}^m\Pi_k[d_k]$ by taking a direct sum of those just attached to each constituent $\Pi_k[d_k]$. To list such a global Arthur parameter in the $i^{\mathrm{th}}$ row of the table is to conjecture that its associated local Langlands parameters are the $c_{\infty}(\tilde{\pi_i})$ and $c_{E_{\FP}}(\tilde{\pi_i})$ introduced in the previous subsection. If $A_i$ is the conjectured global Arthur parameter in the $i^{\mathrm{th}}$ row of the table, we denote by $t_{\FP}(A_i)$ the conjugacy class obtained by applying the associated $c_{E_{\FP}}$ to $\mathrm{Frob}_{\FP}$ (for $p\neq 3$, so that this is well-defined).
 \subsection{Recovering the Hecke eigenvalues}
 All the entries in the final column of the table must agree with the requirement $$c_{\infty}(\tilde{\pi_i}):z\mapsto \diag\left((z/\overline{z})^{11/2},(z/\overline{z})^{9/2},\ldots,(z/\overline{z})^{-11/2}\right), $$ and indeed they do, as illustrated by the following examples.

 For $\mathbf{1}$, $c_{\infty}([12])=\mathrm{Sym}^{11}\left(\diag \left((z/\overline{z})^{1/2},(z/\overline{z})^{-1/2}\right)\right)$, which is precisely $$\diag\left((z/\overline{z})^{11/2},(z/\overline{z})^{9/2},\ldots,(z/\overline{z})^{-11/2}\right).$$

 For $\mathbf{2}$, $\Delta_{11}$ contributes the exponents $11/2,-11/2$, while $[10]$ contributes the remaining $9/2,7/2,\ldots,-7/2,-9/2$.

 For $\mathbf{7}$, ${}^3\Delta_{11}$ gives $11/2,-11/2$, ${}^3\Delta_8[2]$ gives $9/2,7/2,-7/2,-9/2$, and $[6]$ the remaining $5/2,3/2,1/2,-1/2,-3/2,-5/2$. Note that ${}^3\Delta_8$ would have contributed $8/2,-8/2$, but with the $[2]$ these got ``smeared'' to either side.

 From the conjectural global Arthur parameter $A_i$ of a $\pi_i$ we may compute a putative $\lambda_i(T_{(2)})$, using the formula
 $$\lambda_i(T_{\FP})=(N\FP)^{11/2}\tr(t_{\FP}(\tilde{\pi}_i))+\frac{p^{12}-1}{p+1}$$ with $\FP=(2)$, $N\FP=4$, substituting $t_{\FP}(A_i)$ for $t_{\FP}(\tilde{\pi}_i)$, to see whether the result agrees with the eigenvalue computed using neighbours.
  \begin{Proposition}\label{recoverT2} For all $1\leq i\leq 20$, excluding $i=12,13$, the formula $4^{11/2}\tr(t_{(2)}(A_i))+\frac{2^{12}-1}{3}$, applied to the conjectured global Arthur parameter $A_i$ in the last column of the table, recovers the computed Hecke eigenvalue for $T_{(2)}$ in the second column. For $i=12,13$ it is at least compatible with the computation referred to in Example 3 in \S\ref{algU4}.
 \end{Proposition}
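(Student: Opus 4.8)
The plan is to verify the identity one row at a time, reducing everything to the constituent-wise description of Satake parameters from \S4.2--4.3. Since $\FP=(2)$ is inert in $E$ with $N\FP=4$, so $(N\FP)^{1/2}=2$ and $4^{11/2}=2^{11}$, for a global Arthur parameter $A_i=\bigoplus_k\Pi_k[d_k]$ the conjugacy class $t_{(2)}(A_i)\subset\GL_{12}(\MC)$ is the multiset union of the $t_{(2)}(\Pi_k)\otimes t_{(2)}([d_k])$, whence
$$\tr\bigl(t_{(2)}(A_i)\bigr)=\sum_k\tr\bigl(t_{(2)}(\Pi_k)\bigr)\,\tr\bigl(t_{(2)}([d_k])\bigr).$$
So it suffices to know the two kinds of factors. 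From $c_{E_\FP}([d])(\mathrm{Frob}_\FP)=\Sym^{d-1}\bigl(\diag(2,2^{-1})\bigr)$ one gets $\tr(t_{(2)}([d]))=\sum_{j=0}^{d-1}2^{\,d-1-2j}=2^{1-d}\tfrac{4^d-1}{3}$; already this settles row $\mathbf 1$, since $2^{11}\tr(t_{(2)}([12]))+\tfrac{2^{12}-1}{3}=\tfrac{2^{24}-1}{3}+\tfrac{2^{12}-1}{3}=5593770$.

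For a constituent $\tilde\Pi_f$ arising from an elliptic newform $f$ of weight $k$, level $1$ or $3$, and nebentypus $\chi_f$ (trivial or $\chi_{-3}$), the $C$-algebraically normalized Satake parameter of $\Pi_f$ at $2$ is $\diag(\gamma,\gamma')$ with $\gamma\gamma'=\chi_f(2)$ and $\gamma+\gamma'=a_2(f)\,2^{-(k-1)/2}$; as $2$ is inert, base change squares it, so
$$\tr\bigl(t_{(2)}(\tilde\Pi_f)\bigr)=\gamma^2+\gamma'^2=\frac{a_2(f)^2}{2^{k-1}}-2\chi_f(2).$$
Since $\chi_{-3}(2)=-1$, this is $a_2(f)^2/2^{k-1}-2$ for $\Delta_{k-1}$ and for the even-weight ${}^3\Delta_{k-1}$, and $a_2(f)^2/2^{k-1}+2$ for the odd-weight ${}^3\Delta_{k-1}$; in particular ${}^3\Delta_6$, the base change of the CM eigenform in $S_7(\Gamma_0(3),\chi_{-3})$ (so $a_2=0$), contributes $2$, consistent with its description as $\psi_6\oplus\overline\psi_6$ after the $C$-algebraic normalization at the inert prime. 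I would then substitute the known eigenvalues --- $\tau(2)=-24$ for $\Delta_{11}$, and the tabulated $a_2$ (equivalently $a_2^2$) of the relevant newforms of $S_k(\Gamma_0(3))$ and $S_k(\Gamma_0(3),\chi_{-3})$ --- combine them via the first display, multiply by $2^{11}$, add $\tfrac{2^{12}-1}{3}=1365$, and compare with the second column of the table (whose entries were produced by Proposition~\ref{IntertwiningOperator}). As a sample, $\mathbf{17}$ with $A_{17}={}^3\Delta_6[6]$ gives $\tr(t_{(2)}(A_{17}))=2\cdot 2^{1-6}\tfrac{4^6-1}{3}=\tfrac{1365}{16}$ and $2^{11}\cdot\tfrac{1365}{16}+1365=129\cdot1365=176085$; and $\mathbf 2$ collapses to $\tau(2)^2-2^{12}+\tfrac{4(4^{10}-1)}{3}+1365=1395945$. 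The remaining rows are the same kind of bookkeeping, and because ${}^3\Delta_6,{}^3\Delta_8,{}^3\Delta_{10},{}^3\Delta_{11},\Delta_{11}$ recur in several rows each, one obtains redundant checks.

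The only datum that is not a routine table look-up is the Satake trace at $(2)$ of the ``exotic'' constituents $\Delta_{9,3}$ in row $\mathbf{16}$, $\Delta_{9,1}$ in row $\mathbf{18}$, and $\Delta_{11,5}^{(2)}$ in rows $\mathbf{12},\mathbf{13}$: these are not classical $\SL_2(\MZ)$- or $\Gamma_0(3)$-eigenforms but the $\MU(2,2)$/genus-two objects treated in \S\ref{algU4}. For $\Delta_{9,3}$ and $\Delta_{9,1}$ I would feed in the relevant $\tr(t_{(2)})$ from that computation and finish exactly as above. For $\Delta_{11,5}^{(2)}$ the Hecke data is only determined up to the Galois-conjugate pair involving $\sqrt{193}$ --- this is precisely why $\lambda_{12},\lambda_{13}$ carry the $\pm162\sqrt{193}$, and why rows $\mathbf{12},\mathbf{13}$ are excluded from the strict equality; there one instead checks that the two possible values of $2^{11}\tr(t_{(2)}(A_{12}))+\tfrac{2^{12}-1}{3}$ coincide with $23319\pm162\sqrt{193}$, i.e. the pair output by Example 3. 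I expect the main obstacle to be not the arithmetic but keeping the $C$-algebraic and base-change normalizations --- together with the inert-prime correction $\tfrac{p^{12}-1}{p+1}$ --- consistent across all constituents, and being clear that, since the $A_i$ are themselves conjectural, what the computation establishes is that the conjectured parameters plus independently known constituent data reproduce the neighbour-computed eigenvalues, which is exactly the evidential point of the statement.
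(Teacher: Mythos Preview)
Your proposal is correct and follows essentially the same approach as the paper: a constituent-by-constituent computation of $\tr(t_{(2)}(A_i))$ using the explicit Satake parameters for $[d]$, for base-changed $\tilde\Pi_f$ (with the appropriate $-2\chi_f(2)$ sign at the inert prime), and for the $\MU_4$-constituents $\Delta_{9,1},\Delta_{9,3},\Delta_{11,5}^{(2)}$ imported from \S\ref{algU4}, followed by row-by-row numerical verification. The only slight wrinkle is your description of rows $\mathbf{12},\mathbf{13}$: Example~3 does not ``output'' the pair $23319\pm162\sqrt{193}$ but rather works backwards from it to predict $T_{(2)}$-eigenvalues $1314\pm162\sqrt{193}$ on $M(V_{4,2},K_L)$ and checks that their sum matches the independently computed trace $2628$; this is what ``compatible'' means in the statement.
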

 \begin{proof}

 $\mathbf{1}$. We have $t_{(2)}([12])=\diag(4^{11/2},\ldots,4^{-11/2})$, so get
 $$4^{11/2}\tr(t_{(2)}([12]))+\frac{2^{12}-1}{3}=1+4+4^2+\ldots +4^{11}+\frac{2^{12}-1}{3}=\frac{4^{12}-1}{3} +\frac{2^{12}-1}{3}=5593770,$$
 which does appear in the second column of the first row of the table.

 $\mathbf{2}$. We have $t_{(2)}(\Delta_{11}\oplus [10])=\diag(\alpha^2,4^{9/2},4^{7/2},\ldots,4^{-7/2},4^{-9/2},\alpha^{-2})$, so get
 $$4^{11/2}(\alpha^2+\alpha^{-2})+(4+4^2+\ldots 4^{10})+\frac{2^{12}-1}{3}$$
 $$=((2^{11/2}(\alpha+\alpha^{-1}))^2-2\cdot 2^{11})+4\left(\frac{4^{10}-1}{3}\right)+\frac{2^{12}-1}{3}$$
 $$=(a_2(\Delta)^2-2\cdot 2^{11})+4\left(\frac{4^{10}-1}{3}\right)+\frac{2^{12}-1}{3}$$
 $$=((-24)^2-2\cdot 2^{11})+4\left(\frac{4^{10}-1}{3}\right)+\frac{2^{12}-1}{3}=1395495,$$
using $\Delta=q-24q^2+252q^3\ldots$.

 $\mathbf{7}$. Similarly for ${}^3\Delta_{11}\oplus {}^3\Delta_8[2]\oplus [6]$,
 $$(78^2-2.2^{11})+4((6\sqrt{-14})^2+2\cdot 2^8)(1+4)+4^3\left(\frac{4^6-1}{3}\right)+\frac{2^{12}-1}{3}=90873,$$
 using eigenforms $f=q+78q^2-243q^3\ldots\in S_{12}(\Gamma_0(3))$ and $g=q+6\sqrt{-14}q^2+(45-18\sqrt{-14})q^3\ldots \in S_{9}(\Gamma_0(3),\chi_{-3})$.

 $\mathbf{17}$. Since $\psi_6((2))=\overline{\psi}_6((2))=2^6$, for ${}^3\Delta_6[6]$ we get
 $$(2\cdot 2^6)\left(\frac{4^6-1}{3}\right)+\frac{2^{12}-1}{3}=176085.$$

 $\mathbf{19}$. The Hecke eigenform $q-6q^2+9q^3+\ldots$ spans $S_6(\Gamma_0(3))$, so for $({}^3\Delta_5(3)\otimes\psi_6)\oplus \psi_6[4]\oplus\overline{\psi}_6[6]$ we get
 $$((-6)^2-2\cdot 2^5)(2^6)+4\left(\frac{4^4-1}{3}\right)(2^6)+\left(\frac{4^6-1}{3}\right)(2^6)+\frac{2^{12}-1}{3}=108693.$$

 Similar calculations deal with all the other cases, with $\mathbf{12,13,16,18}$ covered in the examples of the following section.
 \end{proof}
 \subsubsection{Algebraic modular forms for $\MU_4$}\label{algU4}
 With $\GG=\MU_4$ and $V=V_{a,b}$ the representation of highest weight $\lambda=a(e_1-e_4)+b(e_2-e_3)$, where $e_i(\diag(t_1,t_2,t_3,t_4))=t_i$ and $a\geq b\geq 0$, consider the space $M(V,K_L)$, where $L$ is the standard Hermitian lattice $\CO_E^4$, and still $E=\MQ(\sqrt{-3})$. If $v$ is an eigenvector for the Hecke algebra, generating an automorphic representation $\pi$ of $\MU_4(\MA_{\MQ})$, then
 $$c_{\infty}(\tilde{\pi}):z\mapsto\left((z/\overline{z})^{a+3/2},(z/\overline{z})^{b+1/2},(z/\overline{z})^{-b-1/2},(z/\overline{z})^{-a-3/2}\right).$$
 Here $\tilde{\pi}$ is a ``standard base change'' cuspidal automorphic representation of $\GL_4(\MA_{E})$, whose existence would be a consequence of \cite[Theorem* 1.7.1]{KMSW}.
 By \cite[Table 1]{Schiemann}, the class number $|\Sigma_{K_L}|=1$, so $M(V,K_L)=V^{\Gamma}$, where $\Gamma:=\MU_4(\MQ)\cap K_L$. The dimension of $M(V,K_L)$ may be computed using Weyl's character formula.
 \begin{Example} $\mathbf{a=3,b=0}$
 One finds that $\dim(M(V,K_L))=1$. Calculating the trace of $T_{(2)}$ on $M(V,K_L)$ by the method of \cite{DumSimple}, the eigenvalue of $T_{(2)}$ is $1872$. The formula for this is now $(N\FP)^{a+3/2}\tr(t_{\FP}(\tilde{\pi}))+(N\FP)^a\left(\frac{p^4-1}{p+1}\right)$ (previously $a=0$ and $11/2$ was in place of $3/2$), from which we deduce $4^{9/2}\tr(t_{\FP}(\tilde{\pi}))=1872-2^6(2^3-2^2+2-1)=1552$, and then $$4^{11/2}\tr(t_{\FP}(\tilde{\pi}))=4(1872-2^6(2^3-2^2+2-1))=6208.$$ For the cuspidal automorphic representation $\tilde{\pi}$ of $\GL_4(\MA_{E})$ (standard base change from $\MU_4(\MA_{\MQ})$) we write $\Delta_{9,1}$, since
 $$\left(\left(\frac{z}{\overline{z}}\right)^{a+3/2},\left(\frac{z}{\overline{z}}\right)^{b+1/2},\left(\frac{z}{\overline{z}}\right)^{-b-1/2},\left(\frac{z}{\overline{z}}\right)^{-a-3/2}\right)=
 \left(\left(\frac{z}{\overline{z}}\right)^{9/2},\left(\frac{z}{\overline{z}}\right)^{1/2},\left(\frac{z}{\overline{z}}\right)^{-1/2},\left(\frac{z}{\overline{z}}\right)^{-9/2}\right).$$ Now looking back at $\mathbf{18}$, $\Delta_{11}\oplus\Delta_{9,1}\oplus{}^3\Delta_5[3]$ gives us the correct
 $$\lambda_{18}(T_{(2)})=((-24)^2-2\cdot 2^{11})+6208+16((-6)^2-2\cdot 2^5)(1+4+4^2)+\frac{2^{12}-1}{3}=-5355.$$
 \end{Example}
 \begin{Example} $\mathbf{a=3,b=1}$. One finds that $\dim(M(V,K_L))=1$. The eigenvalue of $T_{(2)}$ is $0$. This leads to
$$4^{11/2}\tr(t_{\FP}(\tilde{\pi}))=4(0-2^6(2^3-2^2+2-1))=-1280.$$ For the cuspidal automorphic representation $\tilde{\pi}$ of $\GL_4(\MA_{E})$ we write $\Delta_{9,3}$. Looking back at $\mathbf{16}$, $\Delta_{11}\oplus\Delta_{9,3}\oplus{}^3\Delta_6[2]\oplus [2]$ gives us the correct
 $$\lambda_{16}(T_{(2)})=((-24)^2-2\cdot 2^{11})-1280+16(2\cdot 2^6)(1+4)+2^{10}\left(\frac{4^2-1}{3}\right)+\frac{2^{12}-1}{3}=11925.$$
 Remarkably, we find that, though the table poses $\Delta_{11}\oplus \Delta_{9,1}\oplus {}^3\Delta_5[3]$ for $\mathbf{18}$, $\Delta_{9,3}[3]$ would give the same $T_{(2)}$-eigenvalue, since $(-1280)(4^{-1}+1+4)+\frac{2^{12}-1}{3}=-5355$. However, if $\Delta_{9,3}[3]$ were the correct global Arthur parameter, one could deduce a $T_{(\sqrt{-3})}$-contribution from $\Delta_{9,3}$ that would be incompatible with the $T_{(\sqrt{-3})}$ eigenvalue for $\mathbf{16}$, assuming that $\Delta_{11}\oplus \Delta_{9,3}\oplus{}^3\Delta_6[2]\oplus [2]$ for $\mathbf{16}$ (which is linked to a congruence in Example \ref{Klingen5} of \S \ref{Klingen}) is correct.
 \end{Example}
 \begin{Example} $\mathbf{a=4,b=2}$. This time $\dim(M(V,K_L))=2$, and $\tr(T_{(2)})$ is $2628$. For either of the two cuspidal automorphic representations $\tilde{\pi}$ of $\GL_4(\MA_{E})$ (coming from two Hecke eigenvectors) we write $\Delta^{(2)}_{11,5}$.
 If $\Delta_{11,5}^{(2)}\oplus{}^3\Delta_8[2]\oplus [4]$ is correct for $\mathbf{12}$ and $\mathbf{13}$ then
 $$4^{11/2}\tr(t_{\FP}(\tilde{\pi}))+4((6\sqrt{-14})^2+2\cdot 2^8)(1+4)+2^8\left(\frac{4^4-1}{3}\right)+\frac{2^{12}-1}{3}=23319\pm 162\sqrt{193},$$
 which would imply that $4^{11/2}\tr(t_{\FP}(\tilde{\pi}))=34\pm 162\sqrt{193}$, then that the eigenvalues of $T_{(2)}$ on $M(V,K_L)$ are
$$4^{11/2}\tr(t_{\FP}(\tilde{\pi}))+2^8(2^3-2^2+2-1)=1314\pm 162\sqrt{193}.$$
This is consistent with $\tr(T_{(2)})=2628$ on $M(V,K_L)$, which was confirmed independently by the method of \cite{DumSimple}.
 \end{Example}
 \subsubsection{$\FP=(\sqrt{-3})$}\label{FP3}
 The formulas for $\lambda_i(T_{\FP})$ given at the end of \S \ref{eigenvectors} do not apply to the ramified prime $\FP=(\sqrt{-3})$.
 The following seems to work, though we have not justified it.
 $$\lambda_i(T_{(\sqrt{-3})})=3^{11/2}\tr(t_{\FP}(\tilde{\pi}_i))+3^6-1.$$ We hope to recover the computed Hecke eigenvalues by substituting for $t_{\FP}(\tilde{\pi}_i)$ a Satake parameter $t_{\FP}(A_i)$ associated with the conjectured global Arthur parameter $A_i$.
 We must explain what we mean by this, given that the representations are not always unramified at $\FP=\sqrt{-3}$. We shall not attempt to apply the formula to cases involving any $\Delta_{2a+3,2b+1}$. The automorphic representation $[d]$ of $\GL_d(\MA_E)$ is unramified at $\FP$, and we calculate $t_{\FP}([d])$, an actual Satake parameter, just as before. The automorphic representations $\psi_6$ and $\overline{\psi_6}$ of $\GL_1(\MA_E)$ are unramified at $\FP$, with Satake parameter $(\sqrt{-3})^6=-27$ in both cases.

 For $\Delta_{11}=\tilde{\Pi_{\Delta}}$, where $\Delta$ is the normalised cusp form of weight $12$ for $\SL_2(\MZ)$, since $\Pi_{\Delta}$ is unramified at $3$, the local representation of $W_{\MQ_3}$ (and therefore its restriction to $W_{E_{\FP}}$) is unramified, and we just take $t_{\FP}(\Delta_{11})=t_3(\Pi_{\Delta})$, an actual Satake parameter.

 For ${}^3\Delta_{k-1}=\tilde{\Pi_f}$, where $f\in S_k(\Gamma_0(3),\chi_{-3})$ with $k$ odd, while the local representation of $W_{\MQ_3}$ is ramified, its restriction to $W_{E_{\FP}}$ is unramified, and using a theorem of Langlands and Carayol \cite[Theorem 4.2.7 (3)(a)]{Hida},
 $$3^{(k-1)/2}\,t_{\FP}({}^3\Delta_{k-1})=\diag(a_3(f),3^{k-1}/a_3(f)).$$ For $k=9$ this is $\diag(45-18\sqrt{-14},45+18\sqrt{-14})$, and for $k=11$ it is
 $\diag(-27+108\sqrt{-5},-27-108\sqrt{-5})$.

 For ${}^3\Delta_{k-1}$, with $k$ even and $f$ a newform in $S_k(\Gamma_0(3))$, ${}^3\Delta_{k-1}$ is ramified at $\FP$, but we just try using the same formula, $3^{(k-1)/2}\,t_{\FP}({}^3\Delta_{k-1})=\diag(a_3(f),3^{k-1}/a_3(f))$. For $k=6,8,10,12$ this is $\diag(3^2,3^3)$, $\diag(-3^3,-3^4)$, $\diag(-3^4,-3^5)$ and $\diag(-3^5,-3^6)$ respectively.

 Putative Satake parameters at $\sqrt{-3}$ for the conjectured global Arthur parameters $A_i$ (excluding $i=12,13,16,18$) now follow from those we have attached to their constituent parts.

 \begin{Proposition}\label{RecoverT3} For all $1\leq i\leq 20$, excluding $i=12,13,16,18$ (which involve $\Delta_{2a+3,2b+1}$), the formula $3^{11/2}\tr(t_{(\sqrt{-3})}(A_i))+3^6-1$, applied to the conjectured global Arthur parameter $A_i$ in the last column of the table, recovers the computed Hecke eigenvalue for $T_{(\sqrt{-3})}$ in the third column.
 \end{Proposition}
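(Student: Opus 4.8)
The plan is to verify the claimed identity row by row, in exactly the spirit of the proof of Proposition \ref{recoverT2}, but now working at the ramified prime $\FP=(\sqrt{-3})$, where $N\FP=3$. For each $i\notin\{12,13,16,18\}$ I would first expand the conjectured global Arthur parameter $A_i=\bigoplus_k\Pi_k[d_k]$ into its constituents and attach to each a putative Satake conjugacy class
$$t_{(\sqrt{-3})}(\Pi_k[d_k])=t_{(\sqrt{-3})}(\Pi_k)\otimes t_{(\sqrt{-3})}([d_k])\in\GL_{n_kd_k}(\MC),$$
using the recipes assembled in \S\ref{FP3}: $t_{(\sqrt{-3})}([d])=\Sym^{d-1}\diag(\sqrt3,1/\sqrt3)$; for $\psi_6$ and $\overline{\psi_6}$ the value entering the trace is $(\sqrt{-3})^6/3^3=-1$ (the ``$-27$'' recorded in \S\ref{FP3} being $(N\FP)^3$ times this); $t_{(\sqrt{-3})}(\Delta_{11})=t_3(\Pi_\Delta)$, the genuine unramified Satake class of $\Delta$ at $3$, so that $3^{11/2}\tr\big(t_{(\sqrt{-3})}(\Delta_{11})\big)=a_3(\Delta)=252$; and $3^{(k-1)/2}\,t_{(\sqrt{-3})}({}^3\Delta_{k-1})=\diag\big(a_3(f),\,3^{k-1}/a_3(f)\big)$ for the relevant level-$3$ eigenform $f$ -- a genuine identity (Langlands--Carayol) when $k$ is odd, and an extrapolation when $k$ is even, where it evaluates to $\diag(3^2,3^3)$, $\diag(-3^3,-3^4)$, $\diag(-3^4,-3^5)$, $\diag(-3^5,-3^6)$ for $k=6,8,10,12$. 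Then I would assemble $t_{(\sqrt{-3})}(A_i)$ as the block-diagonal sum of these classes, a $12\times12$ matrix, take its trace, multiply by $3^{11/2}$, add $3^6-1=728$, and check that the result is the eigenvalue in the third column.

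I would organise the sixteen cases by the constituent types occurring. The purely Speh case $\mathbf{1}$, $A_1=[12]$, gives $3^{11/2}(3^{11/2}+3^{9/2}+\cdots+3^{-11/2})+728=\tfrac{3^{12}-1}{2}+728=266448$. Cases $\mathbf{2}$ through $\mathbf{9}$ adjoin to a Speh summand $[d]$ some combination of $\Delta_{11}$ and of $\GL_2$-base-change summands ${}^3\Delta_{k-1}$, the latter sometimes carrying a $[2]$ or $[4]$; for example, for $\mathbf{7}$, with $A_7={}^3\Delta_{11}\oplus{}^3\Delta_8[2]\oplus[6]$, one gets $(-3^5-3^6)+\tfrac{10}{9}(3^6+3^5)+(3^8+3^7+3^6+3^5+3^4+3^3)+728=10664$, using $a_3(q+78q^2-243q^3+\cdots)=-3^5$ and $3^4\,t_{(\sqrt{-3})}({}^3\Delta_8)=\diag(45-18\sqrt{-14},\,45+18\sqrt{-14})$. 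The CM cases $\mathbf{10},\mathbf{11},\mathbf{14},\mathbf{15},\mathbf{17}$ bring in $\psi_6,\overline{\psi_6}$ through ${}^3\Delta_6[2m]=\psi_6[2m]\oplus\overline{\psi_6}[2m]$; e.g.\ for $\mathbf{17}$, with $A_{17}={}^3\Delta_6[6]$, one gets $-2(3^8+3^7+3^6+3^5+3^4+3^3)+728=-19656+728=-18928$. Finally $\mathbf{19}$ and $\mathbf{20}$ need the twisted constituent ${}^3\Delta_5\otimes\psi_6$ (resp.\ $\otimes\overline{\psi_6}$), with Satake class $t_{(\sqrt{-3})}(\psi_6)\cdot t_{(\sqrt{-3})}({}^3\Delta_5)=-\diag(3^{-1/2},3^{1/2})$; combined with $\psi_6[4]$ and $\overline{\psi_6}[6]$ this yields $-972-3240-9828+728=-13312$, the common eigenvalue of $v_{19}$ and $v_{20}$.

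Once the normalisations are pinned down the rest is pure arithmetic bookkeeping, and the only input data -- the eigenvalues $a_3(f)$ for $\Delta$ and for the finitely many level-$3$ eigenforms of weights $6,8,9,10,11,12$ -- are classical, and were in any case computed en route. There is thus no deep obstacle. The part demanding the most care is the treatment of the genuinely ramified constituents: the CM shorthand ${}^3\Delta_6$ and the weight-$k$ level-$3$ newforms with $k$ even, for which $3^{(k-1)/2}t_{(\sqrt{-3})}({}^3\Delta_{k-1})=\diag(a_3(f),3^{k-1}/a_3(f))$ is only an extrapolation of the Langlands--Carayol identity, together with keeping the unitary normalisation of $\psi_6,\overline{\psi_6}$ consistent with it. (The cases $i=12,13,16,18$ are left out precisely because their parameters involve the $\GL_4$-base-changes $\Delta_{2a+3,2b+1}$, for which no substitute for this recipe is available.) In this light the statement is really a self-consistency check: it confirms that the conjectural global Arthur parameters, together with these partly heuristic local recipes at $\sqrt{-3}$, reproduce the independently computed Hecke eigenvalues, exactly as Proposition \ref{recoverT2} does at $(2)$.
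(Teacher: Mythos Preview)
Your proposal is correct and follows essentially the same approach as the paper's own proof: a direct case-by-case verification using the local recipes of \S\ref{FP3}, illustrated on a representative sample of the sixteen cases (the paper shows $\mathbf{1},\mathbf{2},\mathbf{6},\mathbf{7},\mathbf{19},\mathbf{20}$ and says ``Other examples may be checked similarly''). Your arithmetic agrees with the paper's in every instance you display, and your remark that the effective Satake value for $\psi_6,\overline{\psi_6}$ is $-1$ (with the paper's $-27$ absorbing an extra $3^3$) is a helpful normalisation gloss consistent with how the paper's displayed formulas are actually organised.
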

\begin{proof}

 $\mathbf{1}: [12]$.
 $$266448=\frac{3^{12}-1}{2} + 3^6-1.$$

 $\mathbf{2}: \Delta_{11}\oplus [10]$.
 $$89552=252+3\left(\frac{3^{10}-1}{2}\right)+3^6-1.$$

 $\mathbf{6}: {}^3\Delta_{10}[2]\oplus {}^3\Delta_7\oplus [6]$.
 $$9368=(-27-27)(1+3)+3^2(-3^3-3^4)+3^3\left(\frac{3^6-1}{2}\right)+3^6-1.$$

 $\mathbf{7}: {}^3\Delta_{11}\oplus {}^3\Delta_8[2]\oplus [6]$.
 $$10664=(-3^5-3^6)+3(45+45)(1+3)+3^3\left(\frac{3^6-1}{2}\right)+3^6-1.$$

 $\mathbf{19}: ({}^3\Delta_5\otimes\psi_6)\oplus \psi_6[4]\oplus\overline{\psi}_6[6],\,\,$ $\mathbf{20}: ({}^3\Delta_5\otimes\overline{\psi}_6)\oplus \overline{\psi}_6[4]\oplus\psi_6[6]$.
 $$-13312=(3^2+3^3)(-27)+3(-27)\left(\frac{3^4-1}{2}\right)+(-27)\left(\frac{3^6-1}{2}\right)+3^6-1.$$
 Other examples may be checked similarly.
 \end{proof}
 \subsection{Eisenstein lattices of rank $12$}\label{Eisenstein}
 Recall from Section \ref{compres} the genus of $5$ classes of rank-$12$, $\sqrt{-3}$-modular lattices, for which we obtained the matrix for the Hecke operator $T_{(2)}$. One finds that the eigenvalues match those of $\mathbf{1,2,4,8,9}$, so presumably the associated automorphic representations have the same global Arthur parameters. Among the conjectured global Arthur parameters on the list, these are precisely those that do not involve anything of level $\Gamma_0(3)$, $\psi_6$, $\overline{\psi_6}$ or some $\Delta_{2a+3,2b+1}$. The unitary group in question is isomorphic to the one we already considered (quasi-split at all finite primes), but the open compact subgroups $K_L$ differ locally at $3$.
 \section{Congruences of Hecke eigenvalues}
 Recall the basis $\{v_1,v_2\ldots,v_{20}\}$ for $M(\mathrm{triv},K_L)$, simultaneous eigenvectors for $T_{(2)}$ and $T_{(\sqrt{-3})}$. In $18$ out of the $20$ cases the eigenvalue for $T_{(2)}$ is rational, and in the other two it lies in $\mathbb{Q}(\sqrt{193})$, which has class number $1$. In those cases where the eigenspace for $T_{(2)}$ is one-dimensional (i.e. for $1\leq i\leq 18$), we may, and do, scale each $v_i$ to have algebraic integer values with no common factor, and it is a common eigenvector for all the $T(\gamma)$.
 \begin{Lemma} Consider $v\in M(\mathrm{triv},K_L)$, with values in $\MZ$ (as a function on the $20$-element set $\Sigma_{K_L}$). Suppose that $v=\sum_{i=1}^{20}c_iv_i$ ($v_i$ scaled as above), with $c_i$ in a sufficiently large number field $F$ (in fact $c_i\in\MQ$ for $1\leq i\leq 11$ and $14\leq i\leq 18$, and $c_i\in\MQ(\sqrt{193})$ for $i=12,13$). Suppose that $\fq$ is a prime of $\CO_{F}$ with $\ord_{\fq}(c_i)<0$, for some $1\leq i\leq 18$. Then there exists some $j\neq i$ such that
 $$\lambda_i(T)\equiv\lambda_j(T)\pmod{\fq}\,\,\,\forall T\in\MT,$$
 where $\MT$ is the $\MZ$-subalgebra of $\End(M(\mathrm{triv},K_L))$ generated by all the $T(\gamma)$.
 \end{Lemma}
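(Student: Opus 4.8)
The plan is to prove the contrapositive of a Ribet-type ``a denominator forces a congruence'' statement: assuming $v=\sum_i c_iv_i$ takes values in $\MZ$ and that $\lambda_i\not\equiv\lambda_j\pmod\fq$ for every $j\ne i$, I will deduce $\ord_\fq(c_i)\ge0$. The starting observation is that $\MT$ is module-finite over $\MZ$: by the counting formula in the proof of Proposition \ref{NeighbourOperator} each $T(\gamma)$ has a matrix with non-negative integer entries in the natural basis $F_1,\dots,F_{20}$ of $M:=M(\mathrm{triv},K_L)$, so, writing $M_\MZ:=\bigoplus_k\MZ F_k$, we have $\MT\subseteq\End_\MZ(M_\MZ)\cong M_{20}(\MZ)$. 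Since $M$ is a semisimple $H_K$-module (Proposition \ref{AdjointOperator}) it is a semisimple $\MT$-module, so after enlarging $F$ if necessary — which leaves the original eigenvalues and the $c_i$ with $i\le18$ inside the coefficient field and only requires replacing $\fq$ by a prime above it — we may assume $M_\MZ\otimes_\MZ F=\bigoplus_{s=1}^{20}Fv_s$ is a direct sum of $\MT$-eigenlines, with each $v_s$ scaled to primitive integral values and with $\MT$ acting on $Fv_s$ through a ring homomorphism $\lambda_s\colon\MT\to\CO_F$. For $1\le s\le18$ these are the $v_s,\lambda_s$ of the statement, and the line $Fv_i$ is intrinsic (a summand of the canonical eigenspace decomposition of the semisimple module), so $\ord_\fq(c_i)$ is well-defined; for $s=19,20$ the choice of eigenbasis of the common $T_{(2)},T_{(\sqrt{-3})}$-eigenspace is immaterial since we seek only \emph{some} $j\ne i$.

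Next I would produce a separating idempotent. Let $\widehat{\CO}_\fq$ be the completion of $\CO_F$ at $\fq$ and $\widehat{F}_\fq$ its fraction field. Then $\widehat{\MT}_\fq:=\MT\otimes_\MZ\widehat{\CO}_\fq$ is module-finite over the complete local ring $\widehat{\CO}_\fq$, hence splits as a finite product of (complete local) rings indexed by its maximal ideals; and two of the extended homomorphisms $\widehat\lambda_s\colon\widehat{\MT}_\fq\to\widehat{\CO}_\fq$ factor through the same maximal ideal precisely when $\lambda_s(T)\equiv\lambda_t(T)\pmod\fq$ for all $T\in\MT$. By hypothesis the maximal ideal $\mathfrak m_i$ through which $\lambda_i$ factors is distinct from those through which the other $\lambda_j$ factor, so the corresponding factor of $\widehat{\MT}_\fq$ is cut out by an idempotent $\epsilon_i\in\widehat{\MT}_\fq$ with $\lambda_s(\epsilon_i)=\delta_{s,i}$ for every $1\le s\le20$. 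Acting on $M_\MZ\otimes_\MZ\widehat{F}_\fq=\bigoplus_s\widehat{F}_\fq v_s$, the element $\epsilon_i$ is therefore the projection onto $\widehat{F}_\fq v_i$; in particular $\epsilon_i\cdot v=c_iv_i$.

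Finally I would invoke integrality. On one side, $\epsilon_i\in\widehat{\MT}_\fq$ preserves the $\widehat{\CO}_\fq$-lattice $M_\MZ\otimes_\MZ\widehat{\CO}_\fq=\bigoplus_k\widehat{\CO}_\fq F_k$, and $v\in M_\MZ$ (it has $\MZ$-values) lies in this lattice, hence so does $c_iv_i=\epsilon_i v$. On the other side $v_i$ is primitive: writing $v_i=\sum_k b_kF_k$ with $b_k\in\CO_F$ and $\sum_k\CO_F b_k=\CO_F$, some $b_{k_0}$ lies outside $\fq$, i.e.\ is a unit of $\widehat{\CO}_\fq$; then $c_ib_{k_0}\in\widehat{\CO}_\fq$ forces $c_i\in\widehat{\CO}_\fq$, i.e.\ $\ord_\fq(c_i)\ge0$, contradicting the hypothesis.

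The delicate point, and the only one that is not bookkeeping with lattices and the eigenspace decomposition, is the existence of the idempotent $\epsilon_i$: this is why one must pass to the \emph{completion} $\widehat{\CO}_\fq$ rather than merely to the localisation $\CO_{F,(\fq)}$ — over a non-complete discrete valuation ring a module-finite algebra can be an integral domain with several maximal ideals (for example $\MZ[\sqrt2]_{(7)}$), so no separating idempotent exists — and it is also here that module-finiteness of $\MT$ over $\MZ$ is used, to guarantee the product decomposition of $\widehat{\MT}_\fq$.
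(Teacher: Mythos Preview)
Your argument is correct, but it takes a more structural route than the paper. The paper argues directly inside $\MT$: assuming no congruence, for each $j\ne i$ pick $T(j)\in\MT$ with $\lambda_i(T(j))\not\equiv\lambda_j(T(j))\pmod{\fq}$, and apply the single operator $\prod_{j\ne i}\bigl(T(j)-\lambda_j(T(j))\bigr)\in\MT$ to $v=\sum_k c_kv_k$. This kills every $c_kv_k$ with $k\ne i$ and multiplies $c_iv_i$ by the $\fq$-unit $\prod_{j\ne i}\bigl(\lambda_i(T(j))-\lambda_j(T(j))\bigr)$; integrality of the left side then contradicts $\ord_\fq(c_i)<0$. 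In effect the paper writes down, by hand and without leaving $\MT$, an element that does the job of your idempotent up to a $\fq$-unit. Your approach via the product decomposition of the completed Hecke algebra $\widehat{\MT}_\fq$ is the conceptual explanation of why such an operator exists, and it makes transparent the role of the hypothesis (it isolates a local factor); the paper's approach is shorter and entirely elementary, needing no completions, idempotent lifting, or structure theory of semilocal rings.
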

 \begin{proof} Suppose for a contradiction that this is not the case. Then for each $j\neq i$ there is some $T(j)\in\MT$ such that $\lambda_i(T(j))\not\equiv\lambda_j(T(j))\pmod{\fq}$. Now apply $\prod_{j\neq i}(T(j)-\lambda_j(T(j)))$ to both sides of $v=\sum_{k=1}^{20}c_kv_k$.
 The left-hand-side remains integral. On the right-hand-side, all the terms for $k\neq i$ are killed, whereas $c_iv_i$ is multiplied by $\prod_{j\neq i}(\lambda_i(T(j))-\lambda_j(T(j)))$, which fails to cancel the $\fq$ in the denominator of $c_i$ (hence of at least one of the entries of $c_iv_i$), contradicting the integrality of the left-hand-side.
 \end{proof}
 \begin{Proposition}\label{congruences} The following congruences hold, where ``$\mathbf{i}\equiv\mathbf{j}\pmod{\fq}$'' is shorthand for ``$\lambda_i(T)\equiv\lambda_j(T)\pmod{\fq}$ for all $T\in\MT$'', and in each case we have replaced $F$ by the smaller field (usually $\MQ$) generated by all the $\lambda_i(T)$ and $\lambda_j(T)$ for the particular pair $i,j$.

 \begin{enumerate} \item $\mathbf{2}\equiv\mathbf{1}\pmod{691},\,\,\,\mathbf{4}\equiv\mathbf{1}\pmod{1847},\,\,\,\mathbf{9}\equiv\mathbf{1}\pmod{809},\,\,\,\mathbf{8}\equiv\mathbf{2}\pmod{809},$\newline $\mathbf{7}\equiv\mathbf{3}\pmod{809};$
  \item $\mathbf{3}\equiv\mathbf{1}\pmod{73},\,\,\,\mathbf{5}\equiv\mathbf{2}\pmod{61},\,\,\,\mathbf{6}\equiv\mathbf{4}\pmod{41};$
  \item $\mathbf{3}\equiv\mathbf{2}\pmod{17},\,\,\,\mathbf{7}\equiv\mathbf{8}\pmod{17},\,\,\,\mathbf{15}\equiv\mathbf{14}\pmod{17};$
  \item $\mathbf{16}\equiv\mathbf{11}\pmod{11},\,\,\,\mathbf{7}\equiv\mathbf{12}\pmod{\fq},\,\,\mathbf{7}\equiv\mathbf{13}\pmod{\overline{\fq}},\,\,\text{with $\fq\mid 59$},$\newline
  $\mathbf{9}\equiv\mathbf{12}\pmod{\fq},\,\,\mathbf{9}\equiv\mathbf{13}\pmod{\overline{\fq}},\,\,\text{with $\fq\mid 23$}.$
  \end{enumerate}
  \end{Proposition}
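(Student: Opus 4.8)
The plan is to derive every congruence in the statement from the Lemma, so that the only real work is to supply, for each pair $(i,j)$ and prime $\fq$ in the list, a witness: an integer-valued $v\in M(\mathrm{triv},K_L)$ whose expansion $v=\sum_{k=1}^{20}c_kv_k$ in the integrally normalised eigenbasis has $\ord_{\fq}(c_i)<0$ for one of the two indices, say $i$. (Here $v_{19},v_{20}$ are fixed once and for all to span their common eigenspace integrally; this choice does not affect $c_i$ for $i\le18$, and since every index occurring in the statement is $\le18$, the hypothesis ``$1\le i\le18$'' of the Lemma is never an obstruction.) To find such witnesses I would run over the standard basis $e_1,\dots,e_{20}$ of the integral structure $M(\mathrm{triv},K_L)_{\MZ}=\MZ^{\Sigma_{K_L}}$; no generality is lost, because a $\MZ$-linear functional on this lattice has a pole at $\fq$ if and only if it has one on some $e_\ell$. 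Writing $P$ for the $20\times 20$ matrix whose columns are the $v_k$ (this, and the eigenvalues, are outputs of the computation in \S\ref{eigenvectors}), the coefficient vectors $(c_k(e_\ell))_k$ are exactly the columns of $P^{-1}$, so $\fq$ occurs for the eigenform $v_i$ precisely when $\fq$ divides the denominator ideal of the $i$-th row of $P^{-1}$. From one computation one then reads off all the relevant primes: they are the $691,1847,809,73,61,41,17,11$ of the statement, together with the primes of $\MQ(\sqrt{193})$ above $59$ and above $23$ (both of which split there), which turn up in the rows $i=12,13$. One can also note that, conversely, every genuine congruence among $v_1,\dots,v_{18}$ is detected this way, so the search is guaranteed to succeed.

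Given a witness, the Lemma delivers \emph{some} $j\neq i$ with $\lambda_i(T)\equiv\lambda_j(T)\pmod{\fq}$ for all $T\in\MT$, and the remaining step is to identify $j$. Since $T_{(2)},T_{(\sqrt{-3})}\in\MT$, any admissible $j$ satisfies
\[
\lambda_j(T_{(2)})\equiv\lambda_i(T_{(2)})\quad\text{and}\quad\lambda_j(T_{(\sqrt{-3})})\equiv\lambda_i(T_{(\sqrt{-3})})\pmod{\fq},
\]
and I would verify from the second and third columns of the table that, modulo $\fq$, the $j$ named in the statement is the unique index other than $i$ meeting both congruences. In the handful of cases where those two columns do not already separate $\{i,j\}$ from the remaining eigenvectors modulo $\fq$ --- a genuine danger at the small primes $11$ and $17$, where several pairs among the twenty eigenvectors become congruent --- I would adjoin one further neighbouring operator, for instance $T_{(2+\sqrt{-3})}$ at the prime above $7$, computed by the method of Proposition \ref{IntertwiningOperator}; since $\MT$ is generated by the $T(\gamma)$, enlarging the set of operators can only sharpen the separation, and finitely many of them suffice. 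Finally, for the pairs meeting $\{12,13\}$ it is enough to prove $\mathbf{7}\equiv\mathbf{12}\pmod{\fq}$ with $\fq\mid59$ and $\mathbf{9}\equiv\mathbf{12}\pmod{\fq}$ with $\fq\mid23$: the nontrivial automorphism of $\MQ(\sqrt{193})$ exchanges $v_{12}\leftrightarrow v_{13}$ and $\fq\leftrightarrow\overline{\fq}$ while fixing $v_7$, $v_9$ and all their (rational) eigenvalues, and hence carries these two congruences to the asserted $\mathbf{7}\equiv\mathbf{13}\pmod{\overline{\fq}}$ and $\mathbf{9}\equiv\mathbf{13}\pmod{\overline{\fq}}$.

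The argument is then entirely mechanical once the two numerical inputs (the matrix $P$ and the two Hecke matrices) are in hand, and I expect the hard part to be purely computational and organisational, concentrated in three points: (a) reading off, row by row of $P^{-1}$, exactly which primes $\fq$ occur and with which admissible index $i\le18$ attached --- the Lemma says nothing when $i\in\{19,20\}$, which is precisely why the repeated $2$-dimensional eigenspace contributes no congruence to the list; (b) the relation ``$\mathbf{i}\equiv\mathbf{j}$'' is not transitive, so each of the listed congruences needs its own witness $v$ and its own identification of $j$, and cannot be chained from the others; and, most substantively, (c) certifying at each $\fq$ that no third index shares the reduced Hecke data of $i$ --- immediate at the large primes $1847,809,691,73,61,41$ but requiring the auxiliary operator $T_{(2+\sqrt{-3})}$ (or yet another) precisely at $\fq\mid11$ and $\fq\mid17$. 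Once (a)--(c) are settled, each congruence in the statement follows at once from the Lemma.
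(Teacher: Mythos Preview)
Your proposal follows essentially the same strategy as the paper: apply the Lemma to simple integral vectors (you use the full standard basis $e_1,\dots,e_{20}$, equivalently the rows of $P^{-1}$; the paper just says ``simple $v$ such as $(1,0,\dots,0)^t$''), read off the primes $\fq$ from the denominators of the $c_i$, and then pin down $j$ by checking the tabulated Hecke eigenvalues. Two small points of divergence are worth noting. First, the paper narrows the search for $j$ by looking first among those $j\neq i$ that \emph{also} have $\ord_{\fq}(c_j)<0$, and it reports that the computed $\lambda_k(T_{(2)})$ alone (with $\lambda_k(T_{(\sqrt{-3})})$ needed in just one case) suffice to eliminate all alternatives; your anticipated need for an extra operator such as $T_{(2+\sqrt{-3})}$ at $\fq\mid 11$ or $\fq\mid 17$ did not materialise, so you would not in fact have to compute it. Second, for the congruence $\mathbf{16}\equiv\mathbf{11}\pmod{11}$ the paper also gives an alternative direct argument (following M\'egarban\'e): the mod $11$ reductions of $v_{16}$ and $v_{11}$ are scalar multiples of one another, from which the congruence of all Hecke eigenvalues follows immediately.
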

  \begin{proof} We applied the above lemma, with simple $v$ such as $(1,0,\ldots,0)^t$, ignoring the ubiquitous primes $q\leq 7$, but otherwise for any instance of $\ord_{\fq}(c_i)<0$, looking among the $j\neq i$ with $\ord_{\fq}(c_j)<0$ and checking the computed values of $\lambda_k(T_{(2)})$ (and in one case also $\lambda_k(T_{(\sqrt{-3})})$) to see whether $\mathbf{i}\equiv\mathbf{j}\pmod{\fq}$ appears to hold, and to eliminate all the other possible $j$.
  \end{proof}
  Though we cannot prove it, not having the eigenvectors $v_{19},v_{20}$, we believe that also $\mathbf{17}\equiv\mathbf{19,20}\pmod{13}$. The rational prime $13$ divides the denominator of $c_{17}$. Furthermore, if our guess for the Arthur parameters is correct then we have an explanation for this congruence, belonging to the second batch, which we present in \S 5.2. The eigenvalues $\lambda_{19}(T),\lambda_{20}(T)$ should lie in $\MQ(\sqrt{-3})$, for all $T\in\MT$. In this field $(13)=\fq\overline{\fq}$, $\fq=((7+\sqrt{-3})/2)$, but we really do mean mod $13$, i.e. both congruences should be modulo both $\fq$ and $\overline{\fq}$.

  There is no guarantee that we have found all such congruences, but we suspect that we have for $q\geq 11$. See \cite[X.4]{ChenevierLannes} for a more thorough method, applied to the genus of rank-$24$ Niemeier lattices, which finds all the congruences among Hecke eigenvalues, including some modulo higher powers of primes and involving more than two eigenspaces at once. M\'egarban\'e successfully employed a somewhat cruder version of their method \cite[Lemme 4.3.2]{Megarbane}, finding optimal moduli for his congruences. Following him, for a second proof of the congruence $\mathbf{16}\equiv\mathbf{11}\pmod{11}$ we checked that the reductions mod $11$ of $v_{16}$ and $v_{11}$ are scalar multiples of one another, from which the mod $11$ congruence of Hecke eigenvalues follows easily. The greatest common divisor of $(\lambda_{16}(T_{(2)})-\lambda_{11}(T_{(2)}))$ and $(\lambda_{16}(T_{(\sqrt{3})})-\lambda_{11}(T_{(\sqrt{-3})}))$ is $2^3\cdot 3^3\cdot 11$, but mod $2$ and mod $3$, $v_{16}$ is not a scalar multiple of $v_{11}$, so we were unable to use his method to increase the modulus beyond $11$.

  \subsection{Ramanujan-type congruences}
 The first batch of congruences is\newline
 $\mathbf{2}\equiv\mathbf{1}\pmod{691},\,\,\,\mathbf{4}\equiv\mathbf{1}\pmod{1847},\,\,\,\mathbf{9}\equiv\mathbf{1}\pmod{809},\,\,\,\mathbf{8}\equiv\mathbf{2}\pmod{809},$\newline $\mathbf{7}\equiv\mathbf{3}\pmod{809}.$

Recall that $A_i$ is the global Arthur parameter conjectured for $\pi_i$, and for any prime ideal $\FP$ of $\CO_E$, let $A_i(\FP)$ be what the Hecke eigenvalue of $T_{\FP}$ on $v_i$ would be, assuming that $A_i$ is the correct global Arthur parameter. (For $\FP=\sqrt{-3}$, this is admitting our guesses for how to calculate the Hecke eigenvalues, in \S\ref{FP3}.) Recall that we know $A_i(\FP)=\lambda_i(T_{\FP})$ for most $i$, for $\FP=(2)$. Wherever we have proved a congruence $\mathbf{i}\equiv\mathbf{j}\pmod{\fq}$, we would expect to find that $A_i(\FP)\equiv A_j(\FP)\pmod{\fq}$, for all $\FP$, and if we are able to prove this then we may view it as supporting the correctness of $A_i$ and $A_j$. For the first batch we will do so using well-known ``Ramanujan-type'' congruences.

 \begin{Proposition} For all primes ideals $\FP$ of $\CO_E$,
 $A_2(\FP)\equiv A_1(\FP)\pmod{691},\,\,\,A_4(\FP)\equiv A_1(\FP)\pmod{1847},\,\,\, A_9(\FP)\equiv A_1(\FP)\pmod{809},\,\,\, A_8(\FP)\equiv A_2(\FP)\pmod{809}$, and $A_7(\FP)\equiv A_3(\FP)\pmod{809}.$
 \end{Proposition}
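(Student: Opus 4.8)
The strategy is to cancel the constituents common to the two global Arthur parameters in each pair, thereby reducing each of the five congruences to a classical congruence between Hecke eigenvalues of elliptic modular forms.

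The first point is that in the recipes of \S\ref{algU4} and \S\ref{FP3} the quantity added to $(N\FP)^{11/2}\tr(t_\FP(A_i))$ to form $A_i(\FP)$ --- namely $\frac{p^{12}-1}{p+1}$ for an inert $\FP=(p)$, $3^6-1$ for $\FP=(\sqrt{-3})$, and $0$ for a split $\FP$ --- depends only on $\FP$, not on $i$. Hence for any pair $i,j$ one has $A_i(\FP)-A_j(\FP)=\tau_\FP(A_i)-\tau_\FP(A_j)$, where $\tau_\FP(A):=(N\FP)^{11/2}\tr(t_\FP(A))$, interpreted at $\FP=(\sqrt{-3})$ through the Langlands--Carayol recipe of \S\ref{FP3}, is additive over the constituents $\Pi_k[d_k]$ of $A$. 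Writing $q=N\FP$, a short geometric-series computation gives $\tau_\FP([d])=q^{(12-d)/2}\frac{q^d-1}{q-1}$, and I would also record $\tau_\FP({}^3\Delta_{10}[2])=(q+1)b_\FP$ with $b_\FP:=(N\FP)^{5}\tr(t_\FP({}^3\Delta_{10}))$, as well as $\tau_\FP({}^3\Delta_8[2])=q(q+1)c_\FP$ and $\tau_\FP({}^3\Delta_8[4])=(q+1)(q^2+1)c_\FP$ with $c_\FP:=(N\FP)^{4}\tr(t_\FP({}^3\Delta_8))$.

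Carrying out the cancellations: for $\mathbf{2}$ against $\mathbf{1}$, using $\tau_\FP([12])-\tau_\FP([10])=q^{11}+1$, one is left needing $\tau_\FP(\Delta_{11})\equiv q^{11}+1\pmod{691}$; for $\mathbf{4}$ against $\mathbf{1}$, using $\tau_\FP([12])-\tau_\FP([8])=(q+1)(q^{10}+1)$, one needs $(q+1)b_\FP\equiv(q+1)(q^{10}+1)\pmod{1847}$; for $\mathbf{8}$ against $\mathbf{2}$ the two copies of $\Delta_{11}$ cancel, for $\mathbf{7}$ against $\mathbf{3}$ the two copies of ${}^3\Delta_{11}$ cancel, and both then reduce, via $\tau_\FP([10])-\tau_\FP([6])=q(q+1)(q^8+1)$, to $q(q+1)c_\FP\equiv q(q+1)(q^8+1)\pmod{809}$; and for $\mathbf{9}$ against $\mathbf{1}$, using $\tau_\FP([12])-\tau_\FP([4])=(q+1)(q^2+1)(q^8+1)$, one needs $(q+1)(q^2+1)c_\FP\equiv(q+1)(q^2+1)(q^8+1)\pmod{809}$. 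Since $691$, $1847$, $809$ are prime, all five follow once one proves, for every prime $\FP$ of $\CO_E$, that $\tau_\FP(\Delta_{11})\equiv (N\FP)^{11}+1\pmod{691}$, that $b_\FP\equiv (N\FP)^{10}+1\pmod{1847}$, and that $c_\FP\equiv (N\FP)^{8}+1\pmod{809}$.

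These three I would obtain from classical congruences combined with the base-change dictionary, treating separately $\FP$ split over $p$ (then $\chi_{-3}(p)=1$ and the left side is $a_p$ of the relevant eigenform $f$), $\FP=(p)$ inert (then $\chi_{-3}(p)=-1$ and it is $a_p(f)^2-2\varepsilon(p)p^{k-1}$, with $\varepsilon$ the nebentypus of $f$), and $\FP=(\sqrt{-3})$ (then for $\Delta_{11}$, unramified at $3$, it is $a_3(\Delta)$, while for the level-$3$ nebentypus newforms it is $a_3(f)+3^{k-1}/a_3(f)=a_3(f)+\overline{a_3(f)}$ by Langlands--Carayol, using $a_3(f)\overline{a_3(f)}=3^{k-1}$). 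The first congruence is Ramanujan's $a_p(\Delta)\equiv 1+p^{11}\pmod{691}$, used directly at split primes and at $\FP=(\sqrt{-3})$ and squared at inert primes (if $x\equiv y$ then $x^2-2p^{11}\equiv y^2-2p^{11}$). The second and third follow in the same way from a known congruence $a_p(f)\equiv\chi_{-3}(p)+p^{k-1}\pmod{\lambda}$ (for $p\ne 3$) with $a_3(f)\equiv 3^{k-1}\pmod{\lambda}$, for $\lambda\mid 1847$ (resp.\ $\lambda\mid 809$), between the newform $f$ underlying ${}^3\Delta_{10}$ (resp.\ ${}^3\Delta_8$) and a weight-$k$ Eisenstein series of level $3$ (cf.\ \cite{DumFret}): feeding this into the dictionary makes $(N\FP)^{k-1}+1$ drop out in each of the three local cases, and the congruence passes from $\pmod{\lambda}$ to $\pmod{\ell}$ ($\ell\in\{691,1847,809\}$) because both sides are rational integers and $\ell$ is prime. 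The one step that is genuine input rather than bookkeeping is to justify that $1847$ and $809$ really occur as moduli of such Eisenstein congruences --- i.e.\ that they divide the numerators of $L(1-11,\chi_{-3})$ and $L(1-9,\chi_{-3})$ respectively; granting that, the remainder is the elementary manipulation above, the $\FP=(\sqrt{-3})$ case needing only the extra identity $a_3(f)\overline{a_3(f)}=3^{k-1}$.
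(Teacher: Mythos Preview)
Your proposal is correct and follows essentially the same route as the paper: cancel the common constituents of the two Arthur parameters, reduce to a congruence of the form $b_\FP\equiv (N\FP)^{k-1}+1$ for the relevant base-changed eigenform, and then verify this case-by-case (split, inert, $\FP=(\sqrt{-3})$) using a classical Eisenstein congruence together with the base-change dictionary. Two small points: the paper's reference for the generalised Ramanujan congruence with nebentypus is \cite[Proposition~2.1]{DumPacific}, not \cite{DumFret} (the latter is for the local-origin congruences of \S5.2); and the paper states the classical congruence in the form $a_p(g)\equiv 1+\chi_{-3}(p)p^{k-1}$ (i.e.\ with the Eisenstein series $E_k^{1,\chi_{-3}}$) rather than your $a_p(f)\equiv\chi_{-3}(p)+p^{k-1}$, though since only $a_p^2$ enters at inert primes, the two agree at split primes, and the $\FP=(\sqrt{-3})$ contribution $a_3+3^{k-1}/a_3$ is symmetric under $a_3\leftrightarrow 3^{k-1}/a_3$, either form suffices for the argument.
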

 \begin{proof}
  First consider $\mathbf{2}\equiv\mathbf{1}\pmod{691}.$ Recall that $A_1=[12]$ and $A_2=\Delta_{11}\oplus [10]$. At any split prime $(p)=\FP\overline{\FP}$,
 $A_2(\FP)=\tau(p)+(p+p^2+\ldots +p^{10})$ and $A_1(\FP)= 1+p+\ldots +p^{11},$
 so the desired $A_2(\FP)\equiv A_1(\FP)\pmod{691}$ is equivalent to Ramanujan's congruence $\tau(p)\equiv 1+p^{11}\pmod{691}$ (and likewise for $\overline{\FP}$). Ultimately, the $691$ arises as a divisor of the Bernoulli number $B_{12}$, equivalently of $\zeta(1-12)$ or of $\zeta(12)/\pi^{12}$.
 At an inert prime $(p)=\FP$ we get
 $$A_2(\FP)=(\tau(p))^2-2\cdot p^{11}+(p^2+p^4+\ldots +p^{20})$$ and $$A_1(\FP)=1+p^2+\ldots + p^{22},$$ so $A_2(\FP)\equiv A_1(\FP)\pmod{691}$ follows from
 $$(\tau(p))^2\equiv 1+2 p^{11}+p^{22}=(1+p^{11})^2\pmod{691},$$ the square of Ramanujan's congruence. The check for the ramified prime $\FP=\sqrt{-3}$ is similar to that for split primes.

 Now we prepare for the case $\mathbf{4}\equiv\mathbf{1}\pmod{1847},$ first looking at an analogue of Ramanujan's mod $691$ congruence. The space $S_{11}(\Gamma_0(3),\chi_{-3})$ is $2$-dimensional, spanned by a Hecke eigenform $g=$
 $$q+12\sqrt{-5} q^2+(-27+108\sqrt{-5})q^3+304q^4-1272\sqrt{-5}q^5+(-6480-324\sqrt{-5})q^6+17324 q^7+\ldots$$
 and its Galois conjugate. It follows, from the fact that for $p\neq 3$ the adjoint $T_p^*=\langle p\rangle T_p$, that $a_p(g)$ is real (hence rational) for split (in $E$) $p$, purely imaginary (hence with rational square) for inert $p$. The prime $1847$ is a divisor of the generalised Bernoulli number $B_{11,\chi_{-3}}$, equivalently of $L(1-11,\chi_{-3})$ or of $L(11,\chi_{-3})/(\sqrt{3}\pi^{11})$. There is a congruence between the Hecke eigenvalues of $g$ and an Eisenstein series $E_{11}^{1,\chi_{-3}}\in M_{11}(\Gamma_0(3),\chi_{-3})$:
 $$a_p(g)\equiv 1+\chi_{-3}(p)p^{10}\pmod{\fq},$$
 with $(1847)=\fq\overline{\fq}$ in $\MQ(\sqrt{-5})$. This is for {\em all} primes $p$. A proof of this kind of generalised Ramanujan-style congruence, presumably well-known, is recorded in \cite[Proposition 2.1]{DumPacific}. As a consequence, for $p$ split in $E$ we have
 $$a_p(g)\equiv 1+p^{10}\pmod{1847},$$ while for $p$ inert in $E$ we have
 $$a_p(g)^2\equiv (1-p^{10})^2\pmod{1847},$$ and for $p=3$, the ramified prime, $a_3(g)\equiv 1\pmod{1847}$.

 Now recall that $A_1=[12]$ and $A_4={}^3\Delta_{10}[2]\oplus [8]$. At a split prime $p$ the desired $A_4(\FP)\equiv A_1(\FP)\pmod{1847}$ would be
 $$(a_p(g))(1+p)+(p^2+p^3+\ldots +p^9)\equiv 1+p+\ldots +p^{11}\pmod{1847},$$
 which, after cancellation, is equivalent to $(1+p)$ times the known $a_p(g)\equiv 1+p^{10}\pmod{1847},$ while at an inert $p$ the desired
 $$(a_p(g)^2+2 p^{10})(1+p^2)+(p^4+p^6+\ldots +p^{18})\equiv 1+p^2+\ldots +p^{22}\pmod{1847},$$
 is equivalent to $(1+p^2)$ times the known $a_p(g)^2\equiv (1-p^{10})^2\pmod{1847}.$ Finally, for $p=3$, bearing in mind \S\ref{FP3}, in the split prime calculation, for $a_p(g)$ we should substitute the trace of $3^{5}\,t_{\FP}({}^3\Delta_{10})=\diag(a_3(g),3^{10}/a_3(g)),$ which will produce the same $1+3^{10}\pmod{1847}$, given the known $a_3(g)\equiv 1\pmod{1847}$.

 Similarly, $809$ divides $L(1-9,\chi_{-3})$, and the remaining congruences in this first batch may be accounted for by a congruence between a cusp form and an Eisenstein series in $M_{9}(\Gamma_0(3),\chi_{-3})$.
\end{proof}
 One might imagine that filling in the last column of the table was a matter of making lots of guesses $A$ giving the right $c_{\infty}:z\mapsto \diag\left((z/\overline{z})^{11/2},(z/\overline{z})^{9/2},\ldots,(z/\overline{z})^{-11/2}\right),$ for each one computing $4^{11/2}\tr(t_{(2)}(A))+\frac{2^{12}-1}{3}$, and hoping to match all the $\lambda_i(T_{(2)})$ in the second column, which had been computed using neighbours. This is only partly true.
 It was easy to guess that $\mathbf{1}$, which has the largest $T_{(2)}$ eigenvalue, should have global Arthur parameter $[12]$, and this did produce the correct $\lambda_1(T_{(2)})$.
 But we then found $A_2, A_4, A_8$ and $A_9$ using the congruences as a guide. The congruence mod $691$, which we recognised as the modulus of Ramanujan's congruence, suggested trying $\Delta_{11}\oplus [10]$ for $A_2$, and when we did, it recovered the $\lambda_2(T_{(2)})$ correctly. Having similarly seen $809$ and $1847$ before, and already having guesses $A_1$ and $A_2$, the congruence $\mathbf{9}\equiv\mathbf{1}\pmod{809}$ led directly to the guess $A_9$, $\mathbf{8}\equiv\mathbf{2}\pmod{809}$ to $A_8$ and $\mathbf{4}\equiv\mathbf{1}\pmod{1847}$ to $A_4$.
 \subsection{Ramanujan-type congruences of local origin}
 The next batch of congruences is\newline
 $\mathbf{3}\equiv\mathbf{1}\pmod{73},\,\,\,\mathbf{5}\equiv\mathbf{2}\pmod{61},\,\,\,\mathbf{6}\equiv\mathbf{4}\pmod{41}$ (proved) and
 $\mathbf{17}\equiv\mathbf{19,20}\pmod{13}$ (believed).

\begin{Proposition} For all prime ideals $\FP$ of $\CO_E$,
$A_3(\FP)\equiv A_1(\FP)\pmod{73},\,\,\,A_5(\FP)\equiv A_2(\FP)\pmod{61},\,\,\,A_6(\FP)\equiv A_4(\FP)\pmod{41}$ and
 $A_{17}(\FP)\equiv A_{19}(\FP),A_{20}(\FP)\pmod{13}$.
\end{Proposition}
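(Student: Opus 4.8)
The plan is to argue exactly as in the previous subsection. For each of the four pairs $(\mathbf{i},\mathbf{j})$ I would cancel the Arthur blocks common to $A_i$ and $A_j$, and then check that what survives reduces, modulo $73$, $61$, $41$ or $13$ respectively, to a classical Ramanujan-type congruence between a trivial-nebentypus newform of level $3$ and an Eisenstein series. The only new feature is the provenance of the modulus: rather than a critical value of $\zeta$ or of a Dirichlet $L$-function, it now has \emph{local origin} at the bad prime $3$, in the sense of \cite{DumFret}. In each case the newform has some weight $k$ --- precisely the one furnishing the block ${}^3\Delta_{k-1}$ that distinguishes $A_i$ from $A_j$ --- and the modulus $\ell$ divides $3^k-1$: concretely $73\mid 3^{12}-1$, $61\mid 3^{10}-1$, $41\mid 3^{8}-1$ and $13\mid 3^{6}-1$, or equivalently $73\mid 3^6+1$, $61\mid 3^5+1$, $41\mid 3^4+1$ and $13\mid 3^3-1$.

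I would do $\mathbf{3}\equiv\mathbf{1}\pmod{73}$ first, as the template. Here $A_3={}^3\Delta_{11}\oplus[10]$ and $A_1=[12]$, and cancelling the common ``$[10]$''-part reduces the claim to $a_p(f)\equiv 1+p^{11}\pmod{73}$ for the weight-$12$ newform $f\in S_{12}(\Gamma_0(3))$ (the one with $a_2(f)=78$), which is an Eisenstein congruence of local origin at $3$ \cite{DumFret}. As in the previous subsection, at a split prime $(p)=\FP\overline{\FP}$ this is precisely what is needed at $\FP$ and at $\overline{\FP}$; at an inert prime $(p)=\FP$ one needs its square, using $t_{\FP}({}^3\Delta_{11})=\diag(\alpha^2,\alpha^{-2})$; and at the ramified prime $\FP=\sqrt{-3}$ one substitutes the diagonal $\diag(a_3(f),3^{11}/a_3(f))=\diag(-3^5,-3^6)$ from \S\ref{FP3}, whereupon the bookkeeping collapses to the identity $A_1(\sqrt{-3})-A_3(\sqrt{-3})=(3^6+1)(3^5+1)$, which $73$ divides because $73\mid 3^6+1$.

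The remaining pairs go the same way. For $\mathbf{5}\equiv\mathbf{2}\pmod{61}$ I would cancel $\Delta_{11}$; since a weight-$10$ block inside a rank-$12$ parameter centred at weight $11$ contributes $a_p$ scaled by $N\FP$, stripping off this factor reduces the claim to $a_p(g)\equiv 1+p^{9}\pmod{61}$ for the newform $g\in S_{10}(\Gamma_0(3))$ --- which is exactly what singles $g$ out among the two newforms in that space. For $\mathbf{6}\equiv\mathbf{4}\pmod{41}$ I would cancel ${}^3\Delta_{10}[2]$ and reduce to $a_p(h)\equiv 1+p^{7}\pmod{41}$ for the newform $h\in S_8(\Gamma_0(3))$. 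For $\mathbf{17}\equiv\mathbf{19}\pmod{13}$ and $\mathbf{17}\equiv\mathbf{20}\pmod{13}$ I would use ${}^3\Delta_6=\psi_6\oplus\overline{\psi}_6$ to write $A_{17}=\psi_6[6]\oplus\overline{\psi}_6[6]$, cancel $\overline{\psi}_6[6]$ (resp.\ $\psi_6[6]$), and factor the Hecke character $\psi_6$ (resp.\ $\overline{\psi}_6$) out of what remains; this character is harmless modulo the primes above $13$, so one is left with the comparison of $[6]$ with ${}^3\Delta_5\oplus[4]$, i.e.\ $a_p(f_6)\equiv 1+p^{5}\pmod{13}$ for the weight-$6$ newform $f_6=q-6q^2+9q^3+\cdots\in S_6(\Gamma_0(3))$. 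All four of these base congruences are Eisenstein congruences of local origin at $3$ \cite{DumFret}, and in each of them the split/inert/ramified analysis is exactly as in the template case (the relevant value of $a_3$ now being $-3^4$, $-3^3$, $3^2$ respectively). I would also note that the ``chiral'' difference between $A_{19}$ and $A_{20}$ at the split prime $7$ does not interfere, since both congruences with $A_{17}$ pass through the same non-chiral congruence for $f_6$.

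The real work is twofold. First, one has to pin down the precise shape of each required congruence --- its modulus, the correct newform, and the value of $a_3$ at the ramified prime --- and match it against the theorems of \cite{DumFret}; the check at $\FP=\sqrt{-3}$, where the level-$3$ form is itself ramified and one must combine the recipe of \S\ref{FP3} with the local divisibility, is the most delicate point, and it is also where the ``local origin'' shows through most transparently (the factors $3^6+1$, $3^5+1$, $3^4+1$, $3^3-1$ divisible by $\ell$). Second, for $\mathbf{17}\equiv\mathbf{19},\mathbf{20}$ the whole verification necessarily takes place on the Arthur-parameter side, because we never computed $v_{19}$ and $v_{20}$; but that is precisely the ``explanation'' promised in the previous subsection for the believed congruence $\mathbf{17}\equiv\mathbf{19},\mathbf{20}\pmod{13}$, and so is a feature rather than a defect.
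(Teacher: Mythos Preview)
Your proposal is correct and follows essentially the same route as the paper: cancel the common Arthur blocks, reduce to an Eisenstein congruence of local origin for a newform in $S_k(\Gamma_0(3))$ with $\ell\mid 3^k-1$, and treat split, inert and ramified $\FP$ separately as in the previous subsection. The one noteworthy difference is at the ramified prime $\FP=(\sqrt{-3})$. You plug in the explicit values of $a_3$ listed in \S\ref{FP3} and verify the divisibility directly (e.g.\ your identity $A_1(\sqrt{-3})-A_3(\sqrt{-3})=(3^6+1)(3^5+1)$). The paper instead invokes a theorem of Gaba and Popa: if $q\mid 3^{k/2}+\epsilon$ then the level-$3$ newform lies in the $\epsilon$-eigenspace for $W_3$, hence $a_3(f)=-\epsilon\,3^{k/2-1}$, and from this one deduces $a_3(f)+3^{k-1}/a_3(f)\equiv 1+3^{k-1}\pmod q$ uniformly. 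Your direct check is perfectly valid here, but the Gaba--Popa argument explains conceptually why the local-origin congruence persists at the bad prime and would generalise without recomputation. For the $\mathbf{17}\equiv\mathbf{19},\mathbf{20}$ case your ``factor out $\psi_6$'' is exactly the paper's formal substitution ${}^3\Delta_5\equiv[6]-[4]\pmod{13}$; just note that ``harmless'' should be read as ``multiplying a quantity already $\equiv 0$ by $\psi_6$ keeps it $\equiv 0$'', not as $\psi_6\equiv 1$.
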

\begin{proof}
 The space $S_{12}(\Gamma_0(3))$ is spanned by a Hecke eigenform $f=q+78q^2-243q^3+\ldots$. For all primes $p\neq 3$ there is a congruence
 $$a_p(f)\equiv 1+p^{11}\pmod{73},$$
 which is of the same shape as Ramanujan's, but whereas $\Delta$ has level $1$ (same as $E_{12}$), $f$ has level $3$. The modulus arises as a divisor of $3^{12}-1$, and may be viewed as a divisor of $\zeta_{\{3\}}(12)/\pi^{12}$, where $\zeta_{\{3\}}(s)=(1-3^{-s})\zeta(s)$, the Riemann zeta function with the Euler factor at $3$ omitted. Such congruences ``of local origin'' were anticipated by Harder in \cite[\S 2.9]{HarderSecOps}, and proved in \cite[Theorem 1.1]{DumFret} or \cite[Theorem 1]{BillereyMenares}.

 Recalling $A_1=[12]$ and $A_3={}^3\Delta_{11}\oplus [10]$, this congruence of local origin may be used to prove $A_3(\FP)\equiv A_1(\FP)\pmod{73}$ in exactly the same way as Ramanujan's congruence was used to prove $A_2(\FP)\equiv A_1(\FP)\pmod{691}$ in the previous subsection, at least for split and inert $p$. For $p=3$ (i.e. $\FP=(\sqrt{-3})$), note that $q\mid(3^k-1)\implies q\mid(3^{k/2}-1)(3^{k/2}+1)$. A theorem of Gaba and Popa \cite[Theorem 1]{GabaPopa} shows that if $q\mid(3^{k/2}+\epsilon)$, with $\epsilon=\pm 1$, then $f$ is in the $\epsilon$ eigenspace for the Atkin-Lehner involution $W_3$. (In this particular example, it happens that $\epsilon=1$.) Then $a_3(f)=-\epsilon 3^{(k/2)-1}$. In the split prime calculation, for $a_p(f)$ we substitute the trace of $\diag(a_3(f),3^{k-1}/a_3(f))$, which is $-\epsilon(3^{(k/2)-1}+3^{k/2})$. Since $q\mid(3^{k/2}+\epsilon)$ (with $q=73$), we may substitute $3^{k/2}$ for $-\epsilon\pmod{73}$, obtaining $-\epsilon(3^{(k/2)-1}+3^{k/2})\equiv 3^{k-1}+3^k\equiv 1+3^{k-1}\pmod{73}$, from which we may proceed as in the split case.

 The congruences $\mathbf{5}\equiv\mathbf{2}\pmod{61}$ and $\mathbf{6}\equiv\mathbf{4}\pmod{41}$ may similarly be dealt with using Ramanujan-type congruences of local origin at the prime $3$, with $61\mid (3^{10}-1)$ (the example from \cite[\S 2.9]{HarderSecOps}) and $41\mid(3^8-1)$. Note that $S_{10}(\Gamma_0(3))$ is $2$-dimensional, but the Hecke eigenform $q-36q^2-81q^3\ldots$ is the one participating in the congruence.

 Finally, since $13\mid (3^6-1)$, there is a congruence mod $13$ of local origin, between the Hecke eigenvalues of $E_6$ and those of the cusp form spanning $S_6(\Gamma_0(3))$. Formally into $A_{19}=({}^3\Delta_5\otimes\psi_6)\oplus \psi_6[4]\oplus\overline{\psi}_6[6]$, we substitute ${}^3\Delta_5\equiv [6]-[4]\pmod{13}$ to get $\psi_6[6]\oplus\overline{\psi_6}[6]$, which is the same as ${}^3\Delta_6[6]$, i.e. $A_{17}$. Likewise for $A_{20}$.
\end{proof}
In fact, recognition of the modulus in the congruence $\mathbf{3}\equiv\mathbf{1}\pmod{73}$ led to the guess $\mathbf{3}: {}^3\Delta_{11}\oplus [10]$, which then produced the correct $\lambda_3(T_{(2)})$. Combined with the last congruence of the previous subsection, this then allowed us to guess the global Arthur parameter for $\mathbf{7}$ too.

 \subsection{Level-raising congruences}
 The next batch of congruences is\newline
 $\mathbf{3}\equiv\mathbf{2}\pmod{17},\,\,\,\mathbf{7}\equiv\mathbf{8}\pmod{17},\,\,\,\mathbf{15}\equiv\mathbf{14}\pmod{17}.$

  Since $a_3(\Delta)=252\equiv -(3^5+3^6)\pmod{17},$ $\Delta$ satisfies the criterion $$a_p(\Delta)\equiv \pm (p^{(k/2)-1}+p^{(k/2)})\pmod{\ell}$$ (with $k=12$, $p=3$ and $\ell=17$) for raising the level by $p$, i.e. there exists a newform $f\in S_{12}(\Gamma_0(3))$ such that
  $$a_q(f)\equiv a_q(\Delta)\pmod{17}\,\,\,\forall q\neq 3.$$
  This raising of the level is a theorem of Ribet \cite{Ribet} in the case $k=2$, completed by Diamond in general for $k\geq 2$ \cite{Diamond}.
  In other words, $\Delta$ and $f$ share the same residual Galois representation at $\ell=17$. (Note that the conditions that this should be irreducible, and that $\ell>k+1$, $\ell\neq p$ are satisfied.)

  On the basis of Ramanujan-style congruences we had already guessed $\mathbf{2}: \Delta_{11}\oplus [10]$, $\mathbf{3}: {}^3\Delta_{11}\oplus [10]$, $\mathbf{7}: {}^3\Delta_{11}\oplus {}^3\Delta_8[2]\oplus [6]$ and $\mathbf{8}:\Delta_{11}\oplus {}^3\Delta_8[2]\oplus [6]$. The congruences $\mathbf{3}\equiv\mathbf{2}\pmod{17}$ and $\mathbf{7}\equiv\mathbf{8}\pmod{17}$ are now perfectly accounted for by the above level-raising congruence, providing further evidence for the conjectured Arthur parameters. Note that in the argument for $a_p(\Delta)\equiv \pm (p^{(k/2)-1}+p^{(k/2)})\pmod{\ell}$ as a necessary condition for level-raising, the $\pm$ is $-\epsilon$, where $\epsilon$ is the eigenvalue for $W_p$ on $f$, so $a_3(\Delta)\equiv a_3(f)+3^{11}/a_3(f)\pmod{17}$, and we find that $A_i(\FP)\equiv A_j(\FP)\pmod{17}$ even for $\FP=\sqrt{-3}$.

  The congruence $\mathbf{15}\equiv\mathbf{14}\pmod{17}$ now suggests the involvement of $\Delta_{11}$ and ${}^3\Delta_{11}$ in $\mathbf{14}$ and $\mathbf{15}$, and a bit of guesswork aimed at filling in the gaps in $c_{\infty}(\tilde{\pi}_i)$ led to proposals that produced the correct $\lambda_i(T_{(2)})$. Congruences $\mathbf{2}\equiv\mathbf{14}\pmod{17}$ and $\mathbf{3}\equiv\mathbf{15}\pmod{17}$ appear to hold if we look just at the $T_{(2)}$-eigenvalues, but the $T_{(\sqrt{-3})}$-eigenvalues rule them out.

  The last batch of congruences appearing in Proposition \ref{congruences} will be dealt with in the following section. Rather than proving the corresponding $A_i(\FP)\equiv A_j(\FP)\pmod{\fq}$  from known congruences, we will use them to provide evidence for some congruences that are only conjectured.
 \section{Eisenstein congruences for $\MU(2,2)$}
 In \cite{BergDum}, a general conjecture was made on congruences of Hecke eigenvalues, between cuspidal automorphic representations of split reductive groups $G$ and representations parabolically induced from Levi subgroups $M$ of maximal parabolic subgroups $P$, modulo divisors of critical values of $L$-functions associated with the latter. In the case $G=\GL_2$ with $P$ a Borel subgroup, $M\simeq \GL_1\times\GL_1$, it predicts the known Ramanujan-type congruences we have already met (including those of local origin). In the case $G=\GSp_2$, $P$ the Siegel parabolic, $M\simeq \GL_2\times\GL_1$, one recovers a conjecture of Harder \cite{Harder123}. With some small modifications one can relax the split condition, and for unitary groups this is explained in \cite{DumUnitary}.

 For $n\geq 1$ let $\MU(n,n)$ be the linear algebraic group over $\MQ$ whose group of $A$-rational points is given by
\begin{equation}
 \MU(n,n)(A)=\{g \in \GL_n(A \otimes_\MQ E)~|~g^\dagger J_n g = J_n \}
\end{equation}
for any commutative $\MQ$-algebra $A$, where $J_n=\begin{bmatrix} 0_n & -I_n\\I_n & 0_n\end{bmatrix}$. This is the
unitary group associated to an Hermitian form of signature $(n,n)$, since $\sqrt{-3} J_n$ is an Hermitian matrix.
In the case $n=2$, there are two classes of maximal parabolic subgroups. There is the Siegel parabolic, with Levi subgroup $M\simeq \GL_{2,E}$, with
$g\mapsto \begin{bmatrix} g & 0\\0 & \,(g^\dagger)^{-1}\end{bmatrix}$, and the Klingen parabolic, with Levi subgroup $M\simeq \GL_{1,E}\times \MU(1,1)$, with $\left(e,\begin{bmatrix} a & b\\c & d\end{bmatrix}\right)\mapsto \begin{bmatrix} e & 0 & 0 & 0\\0 & a & 0 & b\\0 & 0 & (e^\dagger)^{-1} & 0 \\0 & c & 0 & d\end{bmatrix}$.

 \subsection{Klingen parabolic}\label{Klingen}
 In what follows, the correct scaling of the Deligne period $\Omega^{\pm}$ is as in \cite[\S 4]{DumUnitary}. In our examples, where $\fq$ is not a prime of congruence for $f$ in $S_{k'}(\Gamma_0(3))$, the scaling used by the Magma command LRatio \cite{Magma} is correct up to $\fq$-units.
\begin{Conjecture} Let $f\in S_{k'}(\Gamma_0(3))$ be a normalised Hecke eigenform.
 Suppose that $$\ord_{\fq}\left(\frac{L_{\{3\}}(f,(k'/2)+b+1)}{(2\pi i)^{(k'/2)+b+1}\Omega^{(-1)^{(k'/2)+b+1}}}\right)>0$$ or $$\ord_{\fq}\left(\frac{L_{\{3\}}(f,\chi_{-3},(k'/2)+b+1)}{i\sqrt{3}(2\pi i)^{(k'/2)+b+1}\Omega^{(-1)^{(k'/2)+b}}}\right)>0,$$ where $\fq$ divides a rational prime $q>k'$, and $0<b<(k'/2)-1$. Then, letting $a=(k'-4)/2$, there exists a Hecke eigenform $v\in M(V_{a,b},K_L)$ (notation as in \S \ref{algU4}) such that
 $$\lambda_v(T_{\FP})\equiv \begin{cases} a_p(f)+p^{a+b+2}+p^{a-b+1} & \pmod{\fq} \,\, \text{ if $(p)=\FP\overline{\FP}$};\\
 (a_p(f)^2-2p^{k'-1})+p^{k'-4}(p^3-p^2+p-1) & \pmod{\fq} \,\, \text{ if $(p)=\FP$}.\end{cases}$$
 \end{Conjecture}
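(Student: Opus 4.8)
The plan is to realise this as an instance of the general Eisenstein-congruence mechanism behind \cite{BergDum,DumUnitary}: construct a Klingen--Eisenstein series on the quasi-split group $\MU(2,2)$, locate the critical $L$-value of $f$ in its constant term, use the divisibility hypothesis to show that this Eisenstein series becomes cuspidal modulo $\fq$, and transfer the resulting cusp form to the definite inner form $\MU_4$ so as to produce the desired $v\in M(V_{a,b},K_L)$.

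Concretely, I would work with the Klingen parabolic $P$ of $\MU(2,2)$, with Levi $M\simeq\GL_{1,E}\times\MU(1,1)$, and form the Eisenstein series $\CE(\,\cdot\,,s)$ induced from the datum $\eta\otimes\pi_f$, where $\pi_f$ is the automorphic representation of $\MU(1,1)(\MA_\MQ)$ attached to $f$ (holomorphic of weight $k'$ and level $\Gamma_0(3)$) and $\eta$ is a Hecke character of $E$ chosen, together with the evaluation point $s_0$, so that $\CE(\,\cdot\,,s_0)$ is holomorphic and cohomological of weight $V_{a,b}$. The constraint $0<b<(k'/2)-1$ is what places $(k'/2)+b+1$ in the critical strip of $f$ and keeps $\CE(\,\cdot\,,s_0)$ holomorphic, while $a=(k'-4)/2$ is forced by matching the infinity type of $\pi_f$ with that of $V_{a,b}$; the two choices of $\eta$ differing by $\chi_{-3}$ account for the two alternative hypotheses in the statement, because the relevant intertwining integral involves $L(f,s)$ or $L(f,\chi_{-3},s)$ accordingly.

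Next I would compute the constant term of $\CE(\,\cdot\,,s_0)$ along $P$, checking along the Siegel parabolic that it is square-integrable. By the Langlands--Shahidi method this constant term is $\eta\otimes\pi_f+M(s_0)(\eta\otimes\pi_f)$, where the global intertwining operator $M(s_0)$ equals, up to archimedean and ramified-at-$3$ factors, a ratio of partial standard $L$-functions of $f$ whose numerator at $s_0$ is $L_{\{3\}}(f,(k'/2)+b+1)$, resp.\ $L_{\{3\}}(f,\chi_{-3},(k'/2)+b+1)$. I would normalise $\CE$ so that its Fourier--Jacobi coefficients lie in the ring of integers $\CO_F$ of a suitable number field and generate the unit ideal at a prime above $\fq$; the hypotheses $q>k'$ and ``$\fq$ not a prime of congruence for $f$'' are exactly what make the Deligne period $\Omega^{\pm}$, the denominator $L$-value, and the remaining local factors $\fq$-units, so that $\ord_{\fq}$ of the displayed $L$-value being positive forces the constant term of $\CE(\,\cdot\,,s_0)$ to vanish modulo $\fq$. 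A standard lattice argument --- once the integral structures are set up so that the constant-term map has torsion-free cokernel, an integral Eisenstein series with $\fq$-divisible constant term is congruent mod $\fq$ to a cusp form --- then yields a cohomological cuspidal Hecke eigenform on $\MU(2,2)$ whose eigensystem is congruent modulo a prime above $\fq$ to that of $\CE(\,\cdot\,,s_0)$, namely to the right-hand sides in the statement, and which (matching the table) should carry the tempered parameter $\Delta_{2a+3,2b+1}$; alternatively one runs the Ribet--Urban construction to produce directly a four-dimensional Galois representation with residual semisimplification $\rho_f$ plus two characters and reducibility controlled by the $L$-value. Finally, since this representation of $\MU(2,2)(\MA_\MQ)$ has a discrete global Arthur parameter and an archimedean component matching the representation $V_{a,b}$ of the compact group $\MU_4(\MR)$, the Jacquet--Langlands-type transfer of \cite{KMSW} carries it to an automorphic representation of $\MU_4(\MA_\MQ)$ contributing some $v\in M(V_{a,b},K_L)$; as the transfer preserves Satake parameters at all $p\neq 3$, the asserted congruence for $\lambda_v(T_\FP)$ at split and inert $\FP$ follows from the congruence already established on $\MU(2,2)$.

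The hard part is everything at the bad prime $3$, precisely as anticipated in Remark~\ref{technical}. One must make the local intertwining operator and the newvector/Atkin--Lehner theory at $3$ explicit enough to pin down the $\{3\}$-incomplete $L$-values in the constant term and to be certain that no spurious power of $\fq$ is introduced or cancelled there; one needs a Hecke-stable $\CO_F$-lattice at $3$ for the integral normalisations of $\CE$ and of the space of cusp forms; and the local transfer at $3$ for the pertinent non-generic packet is exactly the ingredient for which \cite{KMSW} remains conditional. Until these are in place the statement stays, like the others in this section, a conjecture --- supported by the proven congruence of Hecke eigenvalues in Proposition~\ref{congruences} that it would explain, together with a numerical check that $\fq$ divides the relevant $L$-value.
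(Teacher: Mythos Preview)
The statement in question is a \emph{Conjecture}, and the paper offers no proof of it. What the paper does instead is explain that this is a specialisation of the general Eisenstein-congruence conjecture of \cite{BergDum,DumUnitary} to the Klingen parabolic of $\MU(2,2)$ with $E=\MQ(\sqrt{-3})$ and level $\Gamma_0(3)$, and then give two worked examples (Examples~4 and \ref{Klingen5}) in which the predicted congruence would account for congruences already proved in Proposition~\ref{congruences}, thereby providing numerical support. Remark~\ref{technical} explicitly records that a proof along the Chenevier--Lannes lines is currently out of reach because of the bad prime $3$.

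Your proposal is therefore not to be compared against a proof in the paper, since there is none. What you have written is a reasonable elaboration of the heuristic mechanism behind \cite{BergDum,DumUnitary}: Klingen--Eisenstein series on $\MU(2,2)$, Langlands--Shahidi constant term producing the critical $L$-value, a lattice/Ribet-style argument to pass to a cusp form mod $\fq$, then transfer to the definite form $\MU_4$ via \cite{KMSW}. You correctly isolate the genuine obstructions --- the local analysis at $3$ (intertwining operators, integral structures, local transfer for the relevant packet) --- and you yourself conclude that the statement remains a conjecture. That diagnosis matches the paper's own Remark~\ref{technical}. One small correction: in your Levi description you place $\pi_f$ on the $\MU(1,1)$ factor, but the weight-matching you invoke ($a=(k'-4)/2$ forced by $c_\infty$) really wants the $\GL_2$-type piece ${}^3\Delta_{k'-1}$ sitting in the outer slots $\pm(a+3/2)$, with the $\GL_{1,E}$ factor supplying the inner $\pm(b+1/2)$; the paper's Examples make this pairing explicit when cancelling $[6]$ against $[4]$ (resp.\ $[4]$ against $[2]$). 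In short: there is no gap to name because there is no claimed proof; your sketch is an honest outline of the expected strategy together with an accurate account of why it does not yet go through.
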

This conjecture is less general than that stated in \cite[\S 8]{DumUnitary}. To stick to what is narrowly applicable to the situation here, we have put $E=\MQ(\sqrt{-3})$ rather than a more general quadratic field, and restricted to $f$ of level $\Gamma_0(3)$. This is not necessary, if we simply modify the set $\Sigma$ of ``bad'' primes excluded from the Euler product. The general conjecture asserts the existence of a cuspidal automorphic representation $\tilde{\Pi}$ of $\MU(2,2)$ with Hecke eigenvalues congruent mod $\fq$ to those of an induced representation coming from the base-change to $\GL_2(\MA_E)$ of $\pi_f$. This induced representation depends on a real parameter $s$, which in our case is $b+(1/2)$. The right hand side of the congruence is the Hecke eigenvalue for this induced representation. The conjecture in \cite{DumUnitary} says just that $\tilde{\Pi}$ has set of ramified primes no bigger than $\Sigma$. We have gone a little further, in assuming that $\tilde{\Pi}$ has the same global Arthur parameter as some automorphic representation of $\MU_4(\MA_{\MQ})$ with a $K_L$-fixed vector.
\begin{Example} Let $f\in S_{12}(\Gamma_0(3))$ be the unique normalised newform $f=q+78q^2-243q^3\ldots$. We have $k'=12, a=4$.
Let $b=2$, so $(k'/2)+b+1=9$. Using Magma, $\mathrm{LRatio}(f_{\chi_{-3}},9)=59$. (Here the twisted form $f_{\chi_{-3}}$ is plugged into the Magma command to obtain a suitable normalisation of the twisted $L$-value $L(f,\chi_{-3},9)$.) The conjecture predicts a congruence of Hecke eigenvalues involving one of the $\Delta^{(2)}_{11,5}$, modulo a divisor $\fq$ of $59$ (and consequently for the other one modulo $\overline{\fq}$). Note that $2a+3=11, 2b+1=5$. This congruence would account for $\mathbf{7}\equiv\mathbf{12}\pmod{\fq}$ and $\mathbf{7}\equiv\mathbf{13}\pmod{\overline{\fq}}$, which therefore lend support to this instance of the above conjecture. Recall the guesses $\mathbf{7}: {}^3\Delta_{11}\oplus {}^3\Delta_8[2]\oplus [6]$ and $\mathbf{12}: \Delta^{(2)}_{11,5}\oplus {}^3\Delta_8[2]\oplus [4]$. Note that $p^{a+b+2}+p^{a-b+1}=p^8+p^3=p^3(p^5+1)$, matching perfectly what is left over from the cancellation between $[6]$ and $[4]$.
\end{Example}
\begin{Example}\label{Klingen5} Let $f=q-36q^2-81q^3\ldots$, one of the normalised newforms in $S_{10}(\Gamma_0(3))$. We have $k'=10, a=3$. Let $b=1$, so $(k'/2)+b+1=7$. Using Magma, $\mathrm{LRatio}(f_{\chi_{-3}},7)=22$. The conjecture predicts a congruence mod $11$ of Hecke eigenvalues, involving  $\Delta_{9,3}$. Note that $2a+3=9, 2b+1=3$. This congruence would account for $\mathbf{11}\equiv\mathbf{16}\pmod{11}$, which therefore lends support to the above instance of the conjecture. Recall the guesses $\mathbf{11}: \Delta_{11}\oplus {}^3\Delta_9\oplus {}^3\Delta_6[2]\oplus [4]$ and $\mathbf{16}: \Delta_{11}\oplus\Delta_{9,3}\oplus {}^3\Delta_6[2]\oplus [2]$. Note that $p^{a+b+2}+p^{a-b+1}=p^6+p^3=p^3(p^3+1)$, matching perfectly what is left over from the cancellation between $[4]$ and $[2]$. Note also that the condition $q>k'$ only narrowly holds here, with $11>10$.
\end{Example}

\subsection{Siegel parabolic}
 \begin{Conjecture} Let $f\in S_{k'}(\Gamma_0(3),\chi_{-3})$ (odd $k'>1$) be a normalised Hecke eigenform. Suppose that
 $$\ord_{\fq}\left(\frac{L_{\{3\}}(\Sym^2 f,k'+s)}{(2\pi i)^{k'+2s+1}\langle f,f\rangle}\right)>0,$$ where $\fq$ divides a rational prime $q>2k'$, and $s$ is odd with $1<s\leq k'-2$. Let $a=(k'-4+s)/2$ and $b=(k'-2-s)/2$, so $k'=a+b+3, s=a-b+1$ and $k'+s=2a+4$. Then there exists a Hecke eigenform $v\in M(V_{a,b},K_L)$ such that $\pmod{\fq}$
 $$\lambda_v(T_{\FP})\equiv \begin{cases} a_p(f)(1+p^s) &  \text{ if $(p)=\FP\overline{\FP}$};\\
 (a_p(f)^2+2p^{k'-1})(1+p^{2s})+p^{k'+s-4}(p^3-p^2+p-1) &  \text{ if $(p)=\FP$}.\end{cases}$$
 \end{Conjecture}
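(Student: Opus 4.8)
Since the statement is a conjecture rather than a theorem, what follows it will not be a proof in the strict sense; as in \S\ref{Klingen}, the plan is to record the evidence furnished by the congruences already established in Proposition~\ref{congruences} by exhibiting a matching critical $\Sym^2$ $L$-value. The congruences from the last batch of Proposition~\ref{congruences} that have not yet been explained are $\mathbf{9}\equiv\mathbf{12}\pmod{\fq}$ and $\mathbf{9}\equiv\mathbf{13}\pmod{\overline{\fq}}$ with $\fq\mid 23$, and these are what the Siegel-parabolic conjecture should account for. So the first step is to pin down the data. I would take $f$ to be one of the conjugate pair of normalised Hecke eigenforms spanning $S_9(\Gamma_0(3),\chi_{-3})$, so $k'=9$, and set $s=3$; this forces $a=4$, $b=2$, consistently with $k'=a+b+3$, $s=a-b+1$ and $k'+s=2a+4=12$, and with the cuspidal automorphic representation $\Delta^{(2)}_{11,5}$ of $\GL_4(\MA_E)$ attached to $v_{12}, v_{13}$ in Example~3 in \S\ref{algU4} (note $2a+3=11$, $2b+1=5$). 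The numerical side-conditions hold: $q=23>2k'=18$ and $1<s=3\le k'-2=7$.

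The second step is to evaluate the critical value. With $k'+s=12$ and $k'+2s+1=16$, one must check
$$\ord_{\fq}\left(\frac{L_{\{3\}}(\Sym^2 f,12)}{(2\pi i)^{16}\langle f,f\rangle}\right)>0,$$
i.e.\ that $23$ divides the numerator of the algebraic part of this symmetric-square $L$-value. This is a finite verification: evaluate $L(\Sym^2 f,s)$ near $s=12$ by the usual machinery, divide by the Petersson norm $\langle f,f\rangle$ obtained from a Rankin--Selberg integral, and extract the algebraic part after removing $(2\pi i)^{16}$. As noted at the start of \S\ref{Klingen}, the scaling of the period is unambiguous up to $\fq$-units precisely because $\fq$ is not a prime of congruence for $f$.

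The third step is to confirm consistency with the computed data. The conjecture predicts a Hecke eigenform $v\in M(V_{4,2},K_L)$ with $\lambda_v(T_{\FP})\equiv a_p(f)(1+p^3)\pmod{\fq}$ at split $\FP$ and $\lambda_v(T_{\FP})\equiv (a_p(f)^2+2p^{8})(1+p^{6})+p^8(p^3-p^2+p-1)\pmod{\fq}$ at inert $\FP$. On the other hand, substituting the conjectured parameters $A_9={}^3\Delta_8[4]\oplus[4]$ and $A_{12}=\Delta^{(2)}_{11,5}\oplus{}^3\Delta_8[2]\oplus[4]$ into the proved congruence $\mathbf{9}\equiv\mathbf{12}$, the common $[4]$ cancels, and after peeling off the contributions of ${}^3\Delta_8[4]$ and ${}^3\Delta_8[2]$ one is reduced to the assertion that the contribution of $\Delta^{(2)}_{11,5}$ is congruent mod $\fq$ to that of the virtual difference ${}^3\Delta_8[4]-{}^3\Delta_8[2]$, whose Satake parameters are $\{(N\FP)^{3/2},(N\FP)^{-3/2}\}$ times those of ${}^3\Delta_8$; this is exactly the parameter of the representation induced from the Siegel parabolic with $s=3$, and the left-over $p^{k'+s-4}(p^3-p^2+p-1)=p^8(p^3-p^2+p-1)$ is the $\MU_4$-trivial contribution appearing in Example~3 in \S\ref{algU4}. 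Thus, granting the Arthur parameters, the proved $\mathbf{9}\equiv\mathbf{12,13}\pmod{23}$ is equivalent to this instance of the conjecture and lends it support (and, conversely, recognising $23$ as the relevant $\Sym^2$ modulus is how $A_{12},A_{13}$ were guessed). One can cross-check numerically using $\FP=(2)$: with $a_2(f)^2=-504$ the predicted right-hand side is $(-504+512)(1+2^6)+2^8(2^3-2^2+2-1)=1800$, and Example~3 in \S\ref{algU4} gives $\lambda_v(T_{(2)})=1314\pm 162\sqrt{193}$, with $1314+162\sqrt{193}\equiv 1800$ modulo a prime above $23$ (the other paired with $\overline{\fq}$).

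The main obstacle to an unconditional proof is the one flagged in Remark~\ref{technical}: a Ribet/Chenevier--Lannes-style argument would have to produce a residually Eisenstein automorphic representation of $\MU(2,2)$ congruent mod $\fq$ to the induced one, and the bad prime $3$---at which neither the group nor the relevant automorphic forms are unramified---obstructs the usual control of level structure and of the accompanying Galois deformation problem. Short of that, the combination of the verified $L$-value divisibility and the proved eigenvalue congruence is the evidence on offer.
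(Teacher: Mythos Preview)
Your proposal is correct in structure and identifies the same example as the paper: $f\in S_9(\Gamma_0(3),\chi_{-3})$, $k'=9$, $s=3$, $a=4$, $b=2$, with the congruences $\mathbf{9}\equiv\mathbf{12,13}$ modulo divisors of $23$ serving as the evidence. Your cancellation check between ${}^3\Delta_8[4]$ and ${}^3\Delta_8[2]$ matches the paper's, and your numerical verification at $\FP=(2)$ (that $1314+162\sqrt{193}\equiv 1800$ modulo a prime above $23$, using $193\equiv 9\pmod{23}$) is a pleasant addition not in the paper.

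The one substantive difference is in how the divisibility of the $L$-value by $23$ is exhibited. You propose a generic numerical evaluation of $L(\Sym^2 f,12)/((2\pi i)^{16}\langle f,f\rangle)$. The paper instead pins down the source of the $23$ explicitly and algebraically: since $L_{\{3\}}(\Sym^2 f,t)=P(3^{-t})\,L(\Sym^2 f,t)$ where $P(X)=\det(I-\Sym^2\rho_f(\mathrm{Frob}_3^{-1})X\mid(\Sym^2 V)^{I_3})$, one computes via Langlands--Carayol that $P(X)=1+5022X+3^{16}X^2$ (using $a_3(f)=45-18\sqrt{-14}$, $a_3(\overline{f})=45+18\sqrt{-14}$, $a_3(f)a_3(\overline{f})=3^8$), whence $P(3^{-12})=2^5\cdot 23/3^6$. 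Thus the $23$ is of \emph{local origin} at $3$, coming from the removed Euler factor rather than from the complete $L$-value. This is both more concrete than a floating-point verification and highlights a phenomenon (cf.\ the following Remark in the paper on the $q=19$ case for $s=5$) that your formulation obscures.
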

 Similar remarks apply, concerning the relation of this to the conjecture in \cite[\S 7]{DumUnitary}, as in the previous subsection. The use of
 $(2\pi i)^{k'+2s+1}\langle f,f\rangle$ for the Deligne period is correct (up to $\fq$-units) for our examples, where $\fq$ is not a prime of congruence for $f$ in $S_{k'}(\Gamma_0(3),\chi_{-3})$.
 \begin{Example} Let $f$ be the Hecke eigenform $q+6\sqrt{-14}q^2+(45-18\sqrt{-14})q^3\ldots$ which, with its Galois conjugate, spans $S_9(\Gamma_0(3),\chi_{-3})$. We have $k'=9$. Take $s=3$, so $k'+s=12$, $a=4,b=2$, $2a+1=11, 2b+1=5$. The Euler factor at $3$ in $L(\Sym^2 f,t)$, which is missing in $L_{\{3\}}(\Sym^2 f,t)$, is $P(3^{-t})^{-1}$, where $P(X)=\det(I-\Sym^2\rho_f(\mathrm{Frob}_3^{-1})|(\Sym^2 V)^{I_3})$, where $\rho_f$ is a $\lambda$-adic Galois representation attached to $f$, on a $2$-dimensional space $V$, and we are taking invariants for an inertia subgroup at $3$. According to a theorem of Langlands and Carayol, for which a convenient reference is \cite[Theorem 4.2.7 (3)(a)]{Hida}, the action of $I_3$ on $V$ is diagonalisable, with the trivial character and the character of order $2$ appearing. On the unramified part, $\mathrm{Frob}_3^{-1}$ acts by the $U_3$ eigenvalue, which is the coefficient of $q^3$, and $\det\rho_f$ is the product of the $(k'-1)$ power of the cyclotomic character and the Galois character associated to $\chi_{-3}$. It follows that
 $$P(X)=1+5022X+43046721X^2,$$
 noting that $43046721=3^{16}$, $-5022=(45-18\sqrt{-14})^2+(45+18\sqrt{-14})^2$ and $(45-18\sqrt{-14})(45+18\sqrt{-14})=3^8$.
 Now $P(3^{-12})=\frac{2^523}{3^6}$, and $23>2k'=18$, so the conjecture predicts a congruence mod $\fq$ of Hecke eigenvalues, involving  $\Delta^{(2)}_{11,5}$, where $\fq\mid 23$. This congruence would account for $\mathbf{9}\equiv\mathbf{12}\pmod{\fq}$ and $\mathbf{9}\equiv\mathbf{13}\pmod{\overline{\fq}}$, which therefore lend support to this instance of the above conjecture. Recall the guesses $\mathbf{9}: {}^3\Delta_8[4]\oplus [4]$ and $\mathbf{12}: \Delta^{(2)}_{11,5}\oplus {}^3\Delta_8[2]\oplus [4]$. The $a_p(f)(1+p^3)$ is exactly what is left after cancellation between $\Delta_8[4]$ and $\Delta_8[2]$.
 \end{Example}
 \begin{Remark} In \cite{DumSimple} (repeated in \cite{DumUnitary}), we looked at the case $k'=9, s=5$, with $q=19$ or $37$, gathering a scrap of evidence for the conjectured congruences by computing the Hecke eigenvalues for $T_{(2)}$ on the $2$-dimensional space $M(V_{5,1},K_L)$.
 Contrary to what was stated there, due to a misunderstanding about the Euler factor at $3$ in the $L$-value computed by the formula, the case $q=19$ actually comes from the missing Euler factor at $3$ rather than the complete $L$-value:
 $$P(3^{-14})=\frac{2^5.5^3.7.19}{3^{12}}\,.$$
 \end{Remark}
 \begin{Remark}\label{technical}
 The motivation for this work was to prove instances of Eisenstein congruences for $\MU(2,2)$, following the work of Chenevier and Lannes on Harder's conjecture. However, as noted in the introduction, because there is now a ``bad'' prime $3$, it appears that it is not yet technically feasible to do something similar here. One of the alternative methods they employed was to use Arthur's multiplicity formula to prove the occurrence of the endoscopic types listed in their paper, and though work of Mok and of Kaletha et. al. provides such a formula in our case \cite{Mok,KMSW}, it appears that not enough is currently known about representations of ramified unitary groups to compute the terms in the formula. Similarly, there are problems in trying to imitate Chenevier and Lannes's use of explicit formulas of analytic number theory, to limit the possible components in the endoscopic decompositions. We are grateful to Chenevier for his comments on this. We thank him also for pointing out that it may be possible to prove, following what Ikeda did in the Niemeier lattices setting \cite{IkedaMiyawaki}, some of our guesses for global Arthur parameters (see \S \ref{thetaseries} below). However, this would not cover those cases involved in the congruences for $\MU(2,2)$ which we wish to prove.
 \end{Remark}
 \section{Hermitian theta series}\label{thetaseries}
 Let $$\CH_m=\{Z\in M_n(\MC)\,\mid\,i(Z^\dagger -Z)>0\}$$ be the Hermitian upper half space of degree $m$.
 Given an integral Hermitian lattice $L\subseteq V_{12}$, we define its Hermitian theta series of degree $m$, $\Ct^{(m)}(L):\CH_m\rightarrow\MC$ by
 $$\Ct^{(m)}(L,Z):=\sum_{\mathbf{x}\in L^m}\exp(\pi i \tr(\langle \mathbf{x},\mathbf{x}\rangle Z)).$$
 Applying \cite[Lemma 2.1]{SchSchSch}, we find that if $L$ is unimodular (e.g. $L=\CO_E^{12}$) then $\Ct^{(m)}(L)$ is a modular form of weight $12$ for the group
 $\tilde{\Gamma}^{(m)}:=$ $$\left\{\begin{pmatrix} A & B\\C & D\end{pmatrix}\in \MU(m,m)(\MQ)\,\mid\, A,D\in M_m(\CO_E), B\in (\sqrt{-3})^{-1}M_m(\CO_E),C\in 3\sqrt{-3}M_m(\CO_E)\right\},$$
 i.e. $$\Ct^{(m)}(L,(AZ+B)(CZ+D)^{-1})=|CZ+D|^{12}\,\Ct^{(m)}(L,Z)\,\,\,\,\,\,\,\forall \begin{pmatrix} A & B\\C & D\end{pmatrix}\in\tilde{\Gamma}^{(m)}.$$
 Starting from this, we obtain $\Theta^{(m)}: M(\mathrm{triv},K_L)\rightarrow M^{(m)}_{12}(\tilde{\Gamma}^{(m)})$, i.e.
 $$\Theta^{(m)}(\sum x_jF_j)=\sum \frac{x_j}{|\Aut(L_j)|}\,\Ct^{(m)}(L_j), $$ where $F_j$ is the characteristic function of the class of the lattice $L_j$. This $\Theta^{(m)}$ ought to map each eigenvector $v_i$ to a Hecke eigenform.

 Given a Hecke eigenform $f\in S_{2k+1}(\Gamma_0(3),\chi_{-3})$ with $2k<12$, we may apply a theorem of Ikeda \cite{IkedaHermitian} to obtain a Hecke eigenform
 $I^{(12-2k)}(f)\in S_{12}(\Gamma^{(12-2k)})$, where $\Gamma^{(m)}:=\MU(m,m)(\MQ)\cap M_{2m}(\CO_E)$ is the standard Hermitian modular group.
 More generally there is such an Hermitian Ikeda lift $I^{(m)}(f)\in S_{2k+m}(\Gamma^{(m)})$ for any even $m\geq 2$, given by an explicit Fourier expansion. It ``extends'' to an automorphic representation of $\MU(m,m)(\MA_{\MQ})$, with global Arthur parameter $\tilde{\Pi_f}[m]$.

 It is easy to show that if $f(Z)\in M_{12}(\Gamma^{(m)})$ then for any $N\in \MZ_{>0}$, $f(NZ)\in M_{12}(\Gamma_N^{(m)})$, where
 $$\Gamma_N^{(m)}:=\left\{\begin{pmatrix} A & B\\C & D\end{pmatrix}\in \MU(m,m)(\MQ)\,\mid\, A,D\in M_m(\CO_E), B\in N^{-1}M_m(\CO_E),C\in NM_m(\CO_E)\right\}.$$ Then $\tilde{\Gamma}^{(m)}\subseteq\Gamma_{3}^{(m)}$.
 It follows that $I^{(12-2k)}(f)(3\,Z)\in S_{12}(\tilde{\Gamma}^{(12-2k)})$.
 Following \cite[VII,Corollaire 3.4]{ChenevierLannes}, and looking back at the conjectured global Arthur parameters in our table, it is natural to guess something like the following:

 \begin{enumerate}
 \item $\Theta^{(2)}(v_4)=I^{(2)}(f)(3\,Z)$, with $f\in S_{11}(\Gamma_0(3),\chi_{-3})$;
 \item $\Theta^{(4)}(v_9)=I^{(4)}(f)(3\,Z)$, with $f\in S_{9}(\Gamma_0(3),\chi_{-3})$.
  \end{enumerate}

 Further, comparing with \cite[\S 7]{IkedaMiyawaki}, we should expect $\Theta^{(3)}(v_7)$ and $\Theta^{(3)}(v_8)$ to come from some kind of Hermitian Miyawaki lifts.

 Recall from \S \ref{Eisenstein} that the space of scalar-valued algebraic modular forms for the genus of $5$ classes of rank-$12$, $\sqrt{-3}$-modular lattices has a basis of eigenvectors $\{w_1,w_2,w_4,w_8,w_9\}$, with $T_{(2)}$-eigenvalues matching those of $\{v_1,v_2,v_4,v_8,v_9\}$. Their Hermitian theta series $\Theta^{(m)}(w_i)$ lie in $S_{12}(\Gamma^{(m)})$ \cite[Theorem 2.1]{HentschelNebe},\cite{CohenResnikoff} and it was conjectured in \cite[Remark 3(b)]{HentschelKriegNebeClassification} that $\Theta^{(4)}(w_9)$ is an Hermitian Ikeda lift.
 \section*{Acknowledgements}
 We thank G. Chenevier and G. Nebe for helpful communications. We thank also the anonymous referee, for a thorough reading and many helpful comments. We found neighbours, and conducted isometry tests, using Magma code written by Markus Kirschmer, now publicly available at \url{http://www.math.rwth-aachen.de/~Markus.Kirschmer/}.

\bibliographystyle{abbrv}
\bibliography{Hecke2}

\end{document}